\theoremstyle{plain}
\newtheorem{theorem}{Theorem}
\newtheorem{lemma}{Lemma}
\newtheorem{corollary}{Corollary}
\theoremstyle{definition}
\theoremstyle{definition}
\newtheorem*{definition*}{Definition}
\theoremstyle{remark}
\newtheorem*{remark*}{Remark}
\newcommand{\N}{\mathbb{N}}
\newcommand{\abs}[1]{\left\lvert#1\right\rvert}
\newcommand{\set}[1]{\left\{#1\right\}}
\DeclareMathOperator{\diam}{diam}
\DeclareMathOperator{\Prob}{\mathsf{P}}
\DeclareMathOperator{\Expect}{\mathsf{E}}
\begin{document}

\title{On Bernoulli convolutions generated by  second Ostrogradsky series and their fine fractal properties}

\author{Sergio Albeverio$^{1,2,3,4}$  Mykola Pratsiovytyi$^{5,6}$, \\Iryna Pratsiovyta$^{7}$, Grygoriy Torbin$^{8,9}$  }
\date{}
\maketitle

\begin{abstract}
We study properties of  Bernoulli convolutions generated  by the second Ostrogradsky series, i.e.,  probability
distributions of random variables
\begin{equation}
\xi = \sum_{k=1}^\infty
\frac{(-1)^{k+1}\xi_k}{q_k},
\end{equation}
where $q_k$ is a sequence of positive integers with $q_{k+1}\geq q_k(q_k+1)$, and $\{\xi_k\}$ are independent
 random variables taking the values $0$ and $1$ with
probabilities $p_{0k}$ and $p_{1k}$ respectively. We prove that $\xi$ has
an anomalously fractal Cantor type singular distribution ($\dim_H (S_{\xi})=0$) whose Fourier-Stieltjes transform does not tend to zero at infinity.
We also develop different approaches how to estimate a level of "irregularity" of probability distributions whose spectra are of zero Hausdorff dimension. Using generalizations of the Hausdorff measures and dimensions,  fine fractal properties of the probability measure $\mu_\xi$ are studied in details. Conditions for the Hausdorff--Billingsley dimension preservation
on the spectrum by its probability distribution function are also
obtained.
\end{abstract}


\medskip

$^1$~Institut f\"ur Angewandte Mathematik, Universit\"at Bonn,
Wegelerstr. 6, D-53115 Bonn (Germany); $^2$~SFB 611, Bonn, BiBoS,
Bielefeld--Bonn; $^3$~CERFIM, Locarno and Acc. Arch., USI
(Switzerland); $^4$~IZKS, Bonn; E-mail: albeverio@uni-bonn.de

$^5$~National Pedagogical University, Pyrogova str. 9, 01030 Kyiv
(Ukraine) $^{6}$~Institute for Mathematics of NASU,
Tereshchenkivs'ka str. 3, 01601 Kyiv (Ukraine); E-mail:
prats4@yandex.ru

$^8$~National Pedagogical University, Pyrogova str. 9, 01030 Kyiv
(Ukraine); E-mail: lightsoul2008@gmail.com

$^8$~National Pedagogical University, Pyrogova str. 9, 01030 Kyiv
(Ukraine) $^{9}$~Institute for Mathematics of NASU,
Tereshchenkivs'ka str. 3, 01601 Kyiv (Ukraine); E-mail:
torbin@wiener.iam.uni-bonn.de (corresponding author)

\medskip

\textbf{Mathematics Subject Classification (2010): 11K55, 26A30,
28A80, 60E10.}

\medskip

\textbf{Key words:} second Ostrogradsky series,  Bernoulli convolutions,
singularly continuous probability distributions, convolutions of singular measures,
Fourier--Stieltjes transform, Hausdorff--Billingsley
dimension, fractals, entropy, transformations preserving fractal
dimensions.

\section{Introduction}

About 1861  M.~V.~Ostrogradsky considered two algorithms for the
expansions of positive real numbers in alternating series:
\begin{equation}\label{eq:o1series}
\sum_k \frac{(-1)^{k+1}}{q_1q_2\dots q_k}, \quad \text{where
$q_k\in\N$, $q_{k+1}> q_k$},
\end{equation}
\begin{equation}\label{eq:o2series}
\sum_k \frac{(-1)^{k+1}}{q_k}, \quad \text{where $q_k\in\N$,
$q_{k+1} \geq q_k(q_k+1)$}
\end{equation}
(the first and the second Ostrogradsky series respectively). They
were found by E.~Ya.~Remez among manuscripts and unpublished
papers by M.~V.~Ostrogradsky~\cite{Rem51}. These series give  good rational approximations for real numbers.

One can prove (see, e.g., \cite{Pra98}) that for
any real number $x\in [0,1]$ there exists a sequence $\{q_k = q_{k}(x)\}$ such that $q_{k+1} \geq q_k(q_k+1)$ and
\begin{equation}\label{eq:repres.sec.Ostr.ser}
x =  \frac{1}{q_{1}(x)}-\frac{1}{q_{2}(x)}+\frac{1}{q_{3}(x)}+\ldots+\frac{1}{q_{k}(x)}+\ldots=\sum_{k=1}^{\infty}
\frac{(-1) ^{k+1}}{q_{k}(x)}.
\end{equation}
 If $x$ is irrational, then the expansion (\ref{eq:repres.sec.Ostr.ser}) is unique and it has an infinite number of terms. So, (\ref{eq:repres.sec.Ostr.ser}) establishes  one-to-one mapping between the set of
all infinite increasing sequences of positive integers
$(q_{1},q_{2}\ldots,q_{k},\ldots)$ with
$q_{k+1}\geq q_{k}(q_{k}+1)$ and the set of all irrational
numbers from the unit interval.
If $x$ is rational, then the expression
(\ref{eq:repres.sec.Ostr.ser}) has a finite number of terms and there exist exactly two different expansions of $x$ in form (\ref{eq:repres.sec.Ostr.ser}).

In the present paper we study properties of infinite
Bernoulli convolutions generated by series of the form
\eqref{eq:o2series}. Let us shortly recall that an infinite(general non-symmetric) Bernoulli convolutions with
  bounded spectra is the  distribution of random series
  $\sum\limits_{k=1}^{\infty} \xi_k b_k,$  where $~~ \sum\limits_{k=1}^{\infty}  |b_k|  < \infty$,
 and $\xi_k$ are independent random variables taking values $0$  and $1$ with probabilities $p_{0k}$ and $p_{1k}$ respectively.  Measures of this form
have been studied since 1930's from the pure probabilistic point of
view as well as for their applications in harmonic analysis, in the
theory of dynamical systems and in fractal analysis (see, e.g., \cite{ PeresSclagSolomyak98} for details and references).

The main purpose of the paper is to study properties of  Bernoulli convolutions generated  by the second Ostrogradsky series, i.e., the probability
distributions of random variables
\begin{equation}\label{eq:psi.def}
\xi = \sum_{k=1}^\infty
\frac{(-1)^{k+1}\xi_k}{q_k},
\end{equation}
where $q_k$ is an arbitrary second Ostrogradsky sequence, i.e., sequence of positive integers with $q_{k+1}\geq q_k(q_k+1)$, and $\{\xi_k\}$ is a sequence of independent random variables taking the values $0$ and $1$ with
probabilities $p_{0k}$ and $p_{1k}$ respectively
($p_{0k}+p_{1k}=1$). Since there exists a natural one-to-one correspondence between the set of irrational numbers of the unit interval and the set of infinite second Ostrogradsky sequences, we have a natural parametrization of  symmetric ($p_{0k}=\frac{1}{2}$) random variables of the form (\ref{eq:psi.def}) via the set of irrational numbers.

We prove that the distribution of $\xi$ is anomalously fractal, i.e., it is singular w.r.t. Lebesgue measure and its Hausdorff dimension is equal to zero.  We also study properties of the Fourier-Stieltjes transform and coefficients of these measures. In particular, we show  that this class of Bernoulli convolutions does not contain any Rajchman measure,  i.e., the Fourier-Stieltjes transform of the distribution of  $\xi$  does not tend to zero as $t$ tends to infinity. Moreover, in most cases the upper limit of the corresponding sequence of   the Fourier-Stieltjes coefficients is equal to 1.

Finite as well as infinite convolutions of distributions of random variables of the form (\ref{eq:psi.def}) are also studied in details. In particular, we show that any finite such a convolution has an anomalously fractal spectrum. For the limiting case   we prove that infinite convolution is of pure type (i.e., it is either purely discretely distributed or absolutely continuously resp. singularly continuously distributed), prove necessary and sufficient conditions for the discreteness and show that the Hausdorff dimension of such  an infinite convolution can vary from 0 to 1.

Let $\mu = \mu_{\xi}$ be the probability measure corresponding to $\xi$. Since the spectrum $S_{\mu}$ is of zero Hausdorff dimension and, therefore, the Hausdorff dimension of the measure $dim_H(\mu) := \inf\limits \{ \dim_H(E), E
\in \mathcal{B}, \mu(E)=1 \}$  is also equal to zero, we conclude that  the classical Hausdorff dimension does not
reflect the difference between the spectrum and other essential supports (see, e.g., \cite{TorbinUMJ2005}) of the singular measure $\mu$. Moreover, if $p^{'}_{0k}= p^{'} \in (0,1) $ and    $p^{''}_{0k}= p^{''} \in (0,1), p^{''} \neq p^{'}$, then the corresponding random variables $\xi^{'}$ and $\xi^{''}$ are mutually singularly distributed on the common spectrum $S_\mu$, and all of them are singularly continuous w.r.t. Lebesque measure.

Since the spectra of all random variables $\xi$  are ``very poor'' in both the metric and the fractal sense (their
classical Hausdorff dimensions are equal to zero), to  study fine fractal properties of the distribution of $\xi$ it is necessary to apply more delicate tools than $\alpha-$dimensional Hausdorff measure and the corresponding Hausdorff dimension. To this end we consider  the so called $h-$Hausdorff measures  (see, e.g.,\cite{Fal03})  and Hausdorff-Billingsley dimensions w.r.t. an appropriate probability measure (see, e.g., \cite{AT2} or Section \ref{sec:fractal.properties} for details). As an adequate example of such a measure we consider
the measure $\nu^*$ corresponding to the uniform distribution on the spectrum $S_\mu$. We find necessary and sufficient conditions for $\mu$ to be absolutely continuous resp. singular w.r.t. $\nu^*$,  prove a formula for the calculation of the
Hausdorff--Billingsley dimension of the spectrum  of
$\xi$ w.r.t. the measure $\nu^*$, and show that the distribution function of the measure  $\nu^*$ can be considered as a good choice for the gauge function $h(t)$. Moreover, in the same section  we study
internally fractal properties of $\xi$. In particular, we find the
Hausdorff--Billingsley dimension of the measure $\mu$ itself.

In the last section of the paper we develop third approach how to study a level of "irregularity" of probability distributions whose spectra are of zero Hausdorff dimension. We consider a problem of the preservation of the Hausdorff--Billingsley dimension of subsets of the spectrum under the distribution function $F_{\xi}$. For the case where  elements of the matrix $|p_{ik}|$ are bounded away from zero we find necessary and sufficient conditions for the dimension preservation.



\section{ Expansions of real numbers via the  second Ostrogradsky series.
 }

\begin{definition*}
The numerical series of following form
\begin{equation}\label{eq:vugliad.sec.Ostr.ser}
\frac{1}{q_{1}}-\frac{1}{q_{2}}+\frac{1}{q_{3}}+\ldots+\frac{1}{q_{k}}+\ldots=\sum_{k=1}^{\infty}
\frac{(-1) ^{k+1}}{q_{k}},
\end{equation}
where $\{q_{k}\}$ is a sequence of positive integers with
\begin{equation}\label{eq:ymova}
q_{k+1}\geq q_{k}(q_{k}+1)
\end{equation}
is said to be the second Ostrogradsky series.
\end{definition*}
For any irrational number $x\in [0,1]$ there exists a unique sequence $\{q_k = q_{k}(x)\}$ such that
\begin{equation}\label{eq:repres.sec.Ostr.ser}
x =  \frac{1}{q_{1}(x)}-\frac{1}{q_{2}(x)}+\frac{1}{q_{3}(x)}+\ldots+\frac{1}{q_{k}(x)}+\ldots=\sum_{k=1}^{\infty}
\frac{(-1) ^{k+1}}{q_{k}(x)}.
\end{equation}

The sequence $\{q_{k}\}$ can be determined via the
following algorithm:
\begin{equation} \label{ar:algorithm}
\left\{
 \begin{array}{lll}
1=q_{1}x+\beta_{1}~~~(0\leq\beta_{1}<x),
\\q_{1}=q_{2}\beta_{1}+\beta_{2}~~~(0\leq\beta_{2}<\beta_{1}),
\\q_{2}q_{1}=q_{3}\beta_{2}+\beta_{3}~~~(0\leq\beta_{3}<\beta_{2}),
\\\ldots\ldots\ldots\ldots\ldots\ldots\ldots\ldots
\\q_{k}\ldots q_{2}q_{1}=q_{k+1}\beta_{k}+\beta_{k+1}~~~(0\leq\beta_{k+1}<\beta_{k}),
\\\ldots\ldots\ldots\ldots\ldots\ldots\ldots\ldots
\end{array}
\right.
\end{equation}

Let us consider several  examples of the second Ostrogradsky series:

$$1)~
\frac{1}{1} -\frac{1}{1\cdot2}+\frac{1}{2\cdot3}-\frac{1}{6\cdot7}+\frac{1}{42\cdot43}-\frac{1}{1806\cdot1807}+\ldots.$$ Here $q_1 = 1,$ and $q_{k+1} = q_{k}(q_{k}+1), \forall k \in N.$

$$2)~\frac{1}{s}-\frac{1}{s^{3}}+\frac{1}{s^{7}}-\frac{1}{s^{15}}+\frac{1}{s^{31}}-\ldots=\sum_{k=1}
^{\infty}\frac{(-1)^{k-1}}{s^{m_{k}}},$$ where $2\leq s \in N, $
$ m_{1}=1, $ $ m_{k}=2m_{k-1}+1;$

$$3)~
\frac{1}{2\cdot3}-\frac{1}{2^{4}\cdot 3}+\frac{1}{2^{8}\cdot
3^{8}}-\frac{1}{2^{8}\cdot 3^{32}}+\frac{1}{2^{64}\cdot
3^{64}}-\frac{1}{2^{256}\cdot 3^{64}}+\ldots;
$$

$$4)~
\frac{1}{2}-\frac{1}{7}+\frac{1}{59}-\frac{1}{3541}+\ldots=\sum_{k=1}^{\infty}
\frac{(-1) ^{k-1}}{p_{k}},$$ where $\{p_{k}\}$ is an infinite
sequence of prime numbers such that  $$p_{1}=2, ~~ p_{2}=7, ~~
p_{3}=59, ~ \ldots,$$ $p_{k}$ is the minimal prime number such that
$p_{k} \ge p_{k-1}(p_{k-1}+1).$

Let us mention some evident properties of denominators of the second Ostrogradsky series.

\begin{enumerate}
\item[1.] $ \dfrac{q_{n}}{q_{n+1}}\leq \frac{1}{q_{n}+1}< \frac{1}{q_{n}};$

\item[2.] $ \lim\limits_{n \rightarrow \infty}\dfrac{q_{n}}{q_{n+1}}=0;$

\item[3.] $ q_{n}\geq n!;$


\item[4.] $ \left\{
\begin{array}{lll}
q_{2}\geq q_{1}(q_{1}+1)> q_{1}^{2};\\
q_{3}  > q^2_{2}  > q^4_{1};\\
\ldots\ldots\ldots\ldots\ldots\ldots\ldots\ldots\ldots \\
q_{n+1}> q^2_{n} > q^4_{n-1} > q^8_{n-2}>...>q^{2^{k+1}}_{n-k}> ...>q^{2^{n-1}}_{2}>q^{2^{n}}_{1} ;
\end{array}
\right.$

\item[5.]
 $q_{n+1}>{q_{2}}^{{2}^{n-1}}\geq 2^{{2}^{n-1}}. $


\item[6.] $ \sum\limits_{i=n+1}^{\infty}\dfrac{1}{q_{i}}<\dfrac{2}{q_{n+1}},$

because
$\sum\limits_{i=n+1}^{\infty}\frac{1}{q_{i}}\leq
\frac{1}{q_{n+1}}+\sum\limits_{i=1}^{\infty}\frac{1}{q_{n+i}(q_{n+i}+1)}<
\frac{1}{q_{n+1}}+\frac{1}{q_{n+1}}\sum\limits_{i=1}^{\infty}\frac{1}{q_{n+i}+1}\leq$

$\leq\frac{1}{q_{n+1}} \left(
1+\sum\limits_{i=1}^{\infty}\frac{1}{q_{n+i}} \right)< \frac{1}{q_{n+1}}
\left(1+\frac{1}{2}+\frac{1}{2^2}+\frac{1}{2^3} + \ldots \right) = \frac{2}{q_{n+1}}.$

\item[7.]
$ \dfrac{q_{1}\ldots
q_{n}}{q_{n+1}}\leq \dfrac{q_1}{q_{2}+1+\dfrac{1}{q_{2}}+\dfrac{1}{q_{2}q_{3}}+ \ldots
+\dfrac{1}{q_{2}\ldots q_{n-1}}} <
\dfrac{q_1}{q_{2}+1+\dfrac{1}{q_{2}}}< \frac{2}{7};
$

$ \dfrac{q_{1}\ldots
q_{n}}{q_{n+1}}\leq
\dfrac{1}{q_{1}+1+\dfrac{1}{q_{1}}+\dfrac{1}{q_{1}q_{2}}+ \ldots
+\dfrac{1}{q_{1}\ldots q_{n-1}}}<\dfrac{1}{q_{1}+1}.$

\item[8.] If $a_n :=\frac{1}{q_n}$ and $r_n:= \sum\limits_{i=n+1}^{\infty}\dfrac{1}{q_{i}},$ then $a_n >r_n, \forall n \in N, $

because
$\frac{a_n}{r_n} > \frac{ \frac{1}{q_n}}{\frac{2}{q_{n+1}}} = \frac{q_{n+1}}{2 q_n} \geq \frac{q_n(q_n+1)}{2 q_n} = \frac{q_n+1}{2} \geq 1.$

\end{enumerate}

\section{Probability distributions generated by the second Ostrogradsky series and their convolutions}
\label{sec:random.incomplete.sum}

Let $r$ be an irrational number from the unit interval and let $\{q_k = q_k(r)\}$ be the the second Ostrogradsky  sequence corresponding to the number $r$, i.e., $\{q_k\}$ is a unique infinite sequence of  positive integers satisfying condition (\ref{eq:ymova}), and such that
$$
r =  \frac{1}{q_{1}(r)}-\frac{1}{q_{2}(r)}+\frac{1}{q_{3}(r)}+\ldots+\frac{1}{q_{k}(r)}+\ldots=\sum_{k=1}^{\infty}
\frac{(-1) ^{k-1}}{q_{k}(r)}.
$$
 Let
\[
L=\{e: e= (e_1,e_2,\dots,e_k,\dots), e_k\in \{0,1\} \}.
\]
The sum $s=s(\{e_k\})$ of the series
\begin{equation}\label{eq:incomplete.sum}
\sum_{k=1}^\infty \frac{(-1)^{k-1}e_k}{q_k}, \quad \text{where
$\{e_k\}\in L$},
\end{equation}
is said to be an \emph{incomplete sum of the
series~(\ref{eq:vugliad.sec.Ostr.ser})}. It is clear that $s$
depends on the whole {\it infinite} sequence $\{e_k\}$.
 We denote the
expression~\eqref{eq:incomplete.sum} and its sum $s$ formally by
$\Delta_{e_1 e_2 \dots e_k {\bf \dots}} ~$.
Any partial sum of the
series~(\ref{eq:vugliad.sec.Ostr.ser}) is its incomplete sum.
In a very special case where $e_k=1, ~ \forall k \in N$, we obtain the "complete" sum.

Let  $C_r$ be the set of all incomplete sums of the series
(\ref{eq:vugliad.sec.Ostr.ser}). For any $s\in C_r$ there
exists a sequence $\{e_k\}=\{e_k(s)\}\in L$ such that
\[
s=\sum_{k=1}^\infty \frac{(-1)^{k-1}e_k(s)}{q_k}.
\]

Let $c_1$, $c_2$,~\dots, $c_m$ be a fixed sequence consisting of
zeroes and ones. The set $\Delta'_{c_1c_2\dots c_m}$ of all
incomplete sums $\Delta_{c_1c_2\dots c_m e_{m+1}\dots
e_{m+k}\dots}$, where $e_{m+j}\in \{0,1 \}$ for any $j \in N$, is
called the \emph{cylinder} of rank $m$ with base $c_1 c_2\dots
c_m$. It is evident that
\[
\Delta'_{c_1c_2\dots c_m a_{m+1}}\subset\Delta'_{c_1c_2\dots c_m}, \quad
\forall a_{m+1}\in\{0,1\}.
\]

The closed interval
\[
\left[s_m-\sum_{i:2i>m} \frac{1}{q_{2i}}, s_m+\sum_{i:2i-1>m}
\frac{1}{q_{2i-1}}\right] \mbox{with } s_m:=\sum\limits_{k=1}^m \frac{(-1)^{k-1}c_k}{q_k}
\]
is  said to be the \emph{cylindrical interval of rank $m$ with base $c_1c_2\dots
c_m$} ($c_i\in \{0,1\}$).
We denote it symbolically by $\Delta_{c_1c_2\dots c_m}$.

\begin{lemma}\label{lem:properties}
The cylindrical intervals have the following properties:
\begin{enumerate}[\upshape1.]
\item\label{enu:properties.1}
$\Delta'_{c_1c_2\dots c_m}\subset \Delta_{c_1c_2\dots c_m}$.

\item\label{enu:properties.2} $\Delta_{c_1c_2\dots
c_m}=\Delta_{c_1c_2\dots c_m0}\bigcup \Delta_{c_1c_2\dots c_m1}$.

\item\label{enu:properties.3} The length of $\Delta_{c_1c_2\dots
c_m}$ is equal to
\begin{align*}
\abs{\Delta_{c_1c_2\dots c_m}}&=\diam\Delta'_{c_1c_2\dots c_m}=
 \sum\limits_{k=m+1}^\infty
\frac1{q_k}<\frac{2}{q_{m+1}}\to0\quad (m\to\infty).
\end{align*}

\item\label{enu:properties.4} $\Delta_{c_1c_2\dots
c_m0}\cap\Delta_{c_1c_2\dots c_m1}=\varnothing$.

\item\label{enu:properties.5} $\bigcap\limits_{m=1}^\infty
\Delta_{c_1c_2\dots c_m}= \bigcap\limits_{m=1}^\infty
\Delta'_{c_1c_2\dots c_m} = s =: \Delta_{c_1c_2\dots
c_m\dots}$ for any $\{c_k\}\in L$.

\item\label{enu:properties.6} $C_r=\bigcap\limits_{m=1}^\infty
\bigcup\limits_{c_i\in \{0,1\}} \Delta_{c_1c_2\dots
c_m}$.
\end{enumerate}
\end{lemma}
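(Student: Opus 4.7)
The plan is to verify the six properties in the natural order, leveraging the denominator estimates established in Section 2. I would start with (1) and (3) together, since both hinge on the same extreme-value calculation: any $s \in \Delta'_{c_1\ldots c_m}$ has the form $s = s_m + \sum_{k>m} (-1)^{k-1} e_k/q_k$ with $e_k\in\{0,1\}$, which is bounded above by $s_m + \sum_{i:\,2i-1>m} 1/q_{2i-1}$ (choosing $e_{2i-1}=1$, $e_{2i}=0$ beyond index $m$) and below by $s_m - \sum_{i:\,2i>m} 1/q_{2i}$ (the opposite choice); this gives (1). Both extremes are realised in $\Delta'_{c_1\ldots c_m}$ by the corresponding tail sequences, hence $\diam \Delta'_{c_1\ldots c_m}$ equals the length of the enclosing interval, which telescopes to $\sum_{k>m} 1/q_k$; the bound $<2/q_{m+1}\to 0$ is then property 6 of Section 2.

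For (2) and (4) I would compute the endpoints of $\Delta_{c_1\ldots c_m 0}$ and $\Delta_{c_1\ldots c_m 1}$, splitting according to the parity of $m+1$. In each case a short check shows that the outer endpoints of the parent $\Delta_{c_1\ldots c_m}$ coincide with the corresponding outer endpoints of the two sub-cylinders (giving the decomposition in (2)), while the separation between the inner endpoints equals $1/q_{m+1} - \sum_{k>m+1} 1/q_k$. Property 8 of Section 2 guarantees that this quantity is strictly positive, which immediately yields the disjointness in (4).

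Statement (5) is then immediate: $\{\Delta_{c_1\ldots c_m}\}_m$ is a nested sequence of non-empty compact intervals whose diameters tend to zero by (3), so Cantor's intersection theorem produces a unique common point; combined with (1) and the analogous contraction of $\Delta'_{c_1\ldots c_m}$, that point coincides with the sum $s$ of the series. Finally (6) follows by double inclusion: every $s\in C_r$ is encoded by some $\{e_k\}\in L$ and therefore lies in $\Delta_{e_1\ldots e_m}$ for each $m$; conversely, any point of the intersection selects, by the disjointness in (4), a unique digit $c_m\in\{0,1\}$ at each level and is identified by (5) with the corresponding incomplete sum in $C_r$.

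The main technical obstacle is the endpoint bookkeeping for (2)--(4): the alternating sign in $s_m=\sum(-1)^{k-1}c_k/q_k$ forces a clean parity split into the two cases $m+1$ odd and $m+1$ even, and it is property 8 -- rather than the weaker property 6 -- that delivers \emph{strict} disjointness of the two child cylinders, which is essential for the bijective coding of $C_r$ by binary sequences used throughout the rest of the paper.
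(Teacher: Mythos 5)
The paper itself offers no proof of this lemma (it is stated as evident), so your write-up can only be judged on its own terms. Your treatment of (1), (3), (4), (5) and (6) is sound and is the natural argument: the extreme tail choices give both the inclusion $\Delta'_{c_1\dots c_m}\subset\Delta_{c_1\dots c_m}$ and the identity $\diam\Delta'_{c_1\dots c_m}=\sum_{k>m}1/q_k$, the bound $<2/q_{m+1}$ is property~6 of Section~2, the gap computation plus property~8 gives (4), Cantor's intersection theorem gives (5), and the coding argument (using nesting and same-rank disjointness) gives (6).

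The genuine problem is your claim that the coincidence of the outer endpoints ``gives the decomposition in (2)''. It does not, and by your own computation it cannot: you show that the inner endpoints of $\Delta_{c_1\dots c_m0}$ and $\Delta_{c_1\dots c_m1}$ are separated by the strictly positive amount $\frac{1}{q_{m+1}}-\sum_{k>m+1}\frac{1}{q_k}$, so the union of these two closed intervals is disconnected and omits the open middle gap, whereas the parent $\Delta_{c_1\dots c_m}$ is a single interval containing that gap. (A concrete check with $q_1=1$, $q_2=2$, $q_3=6$, $q_4=42,\dots$ and base $c_1=0$ already exhibits the missing middle interval.) Hence only the inclusion $\Delta_{c_1\dots c_m0}\cup\Delta_{c_1\dots c_m1}\subset\Delta_{c_1\dots c_m}$ can be proved; the equality in item~2 holds verbatim only for the cylinders $\Delta'$, where it is the trivial split over the value of $e_{m+1}$. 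So the defect lies as much in the lemma's formulation as in your argument, but as written your proof of (2) asserts an equality that contradicts the disjointness you establish for (4). You should either prove (2) for $\Delta'_{c_1\dots c_m}$, or replace the equality by the inclusion just mentioned; note that this inclusion together with (4) is exactly what your proofs of (5) and (6), and the rest of the paper, actually use, so nothing downstream is affected.
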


As it has been mentioned above, for the any irrational  $r\in [0,1]~~$ there exists the
sequence $\{q_{k}\}:~x=\sum\limits_{k=1}^{\infty} \frac{(-1)
^{k+1}}{q_{k}}.$  The sequence $\{q_{k}\}$ generates the random
variable of the following form:
\begin{equation}\label{eq:rand.var.def}
\xi=\sum_{k=1}^\infty\frac{(-1)^{k+1}\xi_k}{q_{k}}.
\end{equation}

 This random variable can be represented as a "shifted" infinite Bernoulli convolution (see, e.g., \cite{AT3}), generated by positive convergent series $\sum\limits_{k=1}^\infty a_k$ with $a_k = \frac{1}{q_{k}}$:
\begin{equation}\label{eq:shifted Bernoulli convolution}
\xi=\sum_{k=1}^\infty\frac{(-1)^{k+1}\xi_k}{q_{k}}=
\sum\limits_{k=1}^{\infty}\frac{\eta_{k}}{q_{k}}-2\sum\limits_{k=1}^{\infty}\frac{1}{q_{2k}}=
\sum\limits_{k=1}^{\infty}\frac{\eta_{k}}{q_{k}}- const = \eta - const.
\end{equation}
where $\eta_k$ also takes values $0$ and $1$.  So, random variables $\xi$ and $\eta$ have equivalent distributions.

According to the Jessen--Wintner theorem~(see, e.g.,\cite{JW35}) the random
variable $\xi$ is of pure type, i.e., it is either pure discrete
or pure absolutely continuous resp. pure singularly continuous w.r.t.  Lebesgue measure.

The following proposition follows directly from the P.~L\'evy
theorem~ (\cite{Lev31}) and it gives us necessary and sufficient
conditions for the the continuity of $\xi.$

\begin{lemma}\label{cor:xi.continuous}
The random variable $\xi$ has a continuous  distribution if and only
if
\[
D =\prod_{k=1}^\infty \max\{p_{0k},p_{1k}\}=0.
\]
\end{lemma}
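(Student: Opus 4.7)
My plan is to apply the P.~L\'evy theorem on the pure type of a series of independent discrete random variables directly, after carrying out the two short preparatory checks that it requires: almost sure convergence of the defining series, and identification of the maximal atom probability of each summand.

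First I would write $\xi = \sum_{k=1}^{\infty} X_k$ with $X_k = \frac{(-1)^{k+1}\xi_k}{q_k}$. The $X_k$ are independent by construction, and each is a two-valued discrete random variable taking the value $0$ with probability $p_{0k}$ and the value $(-1)^{k+1}/q_k$ with probability $p_{1k}$. Since $|X_k|\le 1/q_k$ and property~6 of Section~2 gives $\sum_k 1/q_k < 2/q_1 < \infty$, the series converges absolutely with probability one, so $\xi$ is well defined and both the Jessen--Wintner pure-type theorem (already cited) and L\'evy's criterion apply.

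Next I would invoke L\'evy's theorem in the following standard form: for an a.s.\ convergent series $\sum_k X_k$ of independent discrete random variables, the distribution of the sum is purely discrete iff $\prod_k \mu_k > 0$, where $\mu_k := \sup_x \Prob(X_k = x)$, and continuous otherwise. In our setting $X_k$ has exactly two atoms of masses $p_{0k}$ and $p_{1k}$, so $\mu_k = \max\{p_{0k},p_{1k}\}$, and L\'evy's criterion yields continuity of $\xi$ precisely when $D = \prod_k \max\{p_{0k},p_{1k}\} = 0$, which is the stated equivalence.

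To avoid relying on L\'evy's theorem for the ``only if'' direction, a self-contained argument is also available and can be noted as a sanity check: when $D > 0$, choose $a_k \in \{0,1\}$ that realises the maximum in each factor; then the event $\{\xi_k = a_k\ \forall k\}$ has probability $D$, and by items~\ref{enu:properties.4} and~\ref{enu:properties.5} of Lemma~\ref{lem:properties} the map $(e_k)\mapsto \sum_k (-1)^{k+1}e_k/q_k$ is injective on $L$, so this event corresponds to a single real number, giving an atom of $\mu_\xi$ of mass at least $D$; the Jessen--Wintner dichotomy then forces $\xi$ to be purely discrete. I do not anticipate a genuine obstacle: the only substantive step is identifying $\max\{p_{0k},p_{1k}\}$ with the maximal atom probability of the summand $X_k$, after which L\'evy's theorem does all the work.
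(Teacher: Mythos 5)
Your proposal is correct and follows essentially the same route as the paper, which states the lemma as a direct consequence of P.~L\'evy's theorem after noting that each summand $X_k$ is a two-valued discrete random variable with maximal atom probability $\max\{p_{0k},p_{1k}\}$ and that the series converges absolutely. Your additional self-contained argument for the ``only if'' direction (an explicit atom of mass at least $D$, combined with the Jessen--Wintner purity) is a harmless elaboration rather than a different approach.
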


In the sequel we shall be interested in continuous distributions
only.

The \emph{spectrum \textup(topological support\textup)} $S_\xi$ of
the distribution of the random variable $\xi$ is the minimal closed
support of $\xi$. It is clear that  $S\mu$ is a perfect set (i.e., a closed set without isolated points).
Since \begin{align*}
S_\xi=\set{x: \Prob\{\xi\in(x-\varepsilon, x+\varepsilon)\}>0\;
\forall\,\varepsilon>0}=
 \set{x: F_\xi(x+\varepsilon)-F_\xi(x-\varepsilon)>0\;
\forall\,\varepsilon>0},
\end{align*}
where $F_{\xi}$ is the distribution function of the random
variable $\xi$, we deduce that $$S_\xi=\{x: x \mbox{~can be represented in the form~} \sum\limits_{k=1}^{\infty}\frac{(-1)^{k+1}e_{k}}{ q_{k}},~e_{k}\in~\{0,1\} \mbox{with} p_{e_{k} k}>0\}.$$


\begin{theorem}
 The spectrum $S_\xi$ of the distribution of the random variable
 $\xi$ is a nowhere dense  set of zero Hausdorff dimension.
\end{theorem}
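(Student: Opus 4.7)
The plan is to bound $S_\xi$ by the family of cylindrical intervals introduced in Lemma~\ref{lem:properties} and to exploit the doubly-exponential growth of the denominators (property~5 of the preceding list). Since $S_\xi\subset C_r$, the lemma gives
\[
S_\xi\subset\bigcup_{(c_1,\dots,c_m)\in\{0,1\}^m}\Delta_{c_1\dots c_m}
\]
for every $m\in\N$, and by iterating properties~(2) and~(4) these $2^m$ cylinders are pairwise disjoint closed intervals, each of length at most $\frac{2}{q_{m+1}}$ by property~(3). This single covering drives both claims of the theorem.

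For the Hausdorff dimension I fix $\alpha>0$ and use the rank-$m$ cover to estimate
\[
\mathcal{H}^{\alpha}_{2/q_{m+1}}(S_\xi)\le 2^m\!\left(\frac{2}{q_{m+1}}\right)^{\!\alpha}\le \frac{2^{m+\alpha}}{2^{\alpha\,2^{m-1}}},
\]
where the last estimate uses the bound $q_{m+1}\ge 2^{2^{m-1}}$ from property~5. For every fixed $\alpha>0$ the right-hand side tends to $0$ as $m\to\infty$, so $\mathcal{H}^{\alpha}(S_\xi)=0$ and therefore $\dim_H(S_\xi)=0$.

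For nowhere density I take any open interval $I$ meeting $S_\xi$ (otherwise $I\not\subset S_\xi$ is trivial) and choose $x\in I\cap S_\xi$. Picking $m$ large enough that $\frac{2}{q_{m+1}}<\operatorname{dist}(x,\R\setminus I)$, the unique rank-$m$ cylinder $\Delta_{c_1\dots c_m}$ containing $x$ lies inside $I$. Its two children $\Delta_{c_1\dots c_m 0}$ and $\Delta_{c_1\dots c_m 1}$ are disjoint closed subintervals of $\Delta_{c_1\dots c_m}$ by properties~(2) and~(4), so the complement in $\Delta_{c_1\dots c_m}$ is a nonempty open ``gap'' whose length equals $\frac{1}{q_{m+1}}-\sum_{i\ge m+2}\frac{1}{q_i}>0$ by property~8 of the denominators. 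This gap lies in $I$ and contains no point of $S_\xi$, since every point of $S_\xi\cap\Delta_{c_1\dots c_m}$ must belong to one of the two children. Hence $I\not\subset S_\xi$; since $S_\xi$ is closed this gives $\mathrm{int}\,S_\xi=\varnothing$, i.e.\ $S_\xi$ is nowhere dense. The only nontrivial step in the whole argument is confirming that the sibling gap is genuinely nonempty and $S_\xi$-free; everything else is driven by the doubly-exponential growth of $q_k$.
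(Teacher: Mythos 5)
Your proof is correct and follows essentially the same route as the paper: the paper also rests on covering $S_\xi$ by the $2^k$ cylinders of length $r_k<\frac{2}{q_{k+1}}\le\frac{2}{2^{2^{k-1}}}$ (through the general Bernoulli-convolution formula $\dim_H(S_\xi)=\liminf_k \frac{k\ln 2}{-\ln r_k}$ imported from \cite{AT3}) and on the condition $a_k>r_k$ for nowhere density, which are exactly the covering estimate and sibling-gap computation you carry out explicitly. The only difference is that you verify self-containedly what the paper delegates to the cited general result.
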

\begin{proof}
From (\ref{eq:shifted Bernoulli convolution}) it follows that topological and fractal properties of random variables $\xi$ and $\eta$ are the same. So, we can apply general results on Bernoulli convolutions (see, e.g., \cite{AT3}).
Since the sequence $\{q_k\}$ is strictly decreasing and $a_k=\frac{1}{q_k}>r_k=\sum\limits_{i=k+1}^{\infty}a_k=\sum\limits_{i=k+1}^{\infty}\frac{1}{q_i}$, we conclude that $S_\xi$ is a nowhere dense set and its Hausdorff dimension can be calculated by the following formula:
$$
\dim_H(S_{\xi}) = \liminf\limits_{k \to \infty} \frac{k \ln 2}{-
\ln r_k}.
$$

From properties (5) and (6) of denominators of the second Ostrogradsky series it follows that $r_{k}=\frac{1}{q_{k+1}}+\frac{1}{q_{k+2}}+\frac{1}{q_{k+3}}+\ldots < \frac{2}{q_{k+1}}< \frac{2}{2^{2^{k-1}}}$.
So, $$
\dim_H(S_{\xi}) = \liminf\limits_{k \to \infty} \frac{k \ln 2}{-
\ln r_k} \leq \lim\limits_{k \to \infty} \frac{k \ln 2}{-
\ln \frac{2}{2^{2^{k-1}}}} = 0,$$
which proves the theorem.
\end{proof}

\begin{corollary}
If $D =0$, then $\xi$ has a singularly continuous distribution of the
Cantor type with an anomalously fractal spectrum.
\end{corollary}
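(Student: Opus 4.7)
The plan is to assemble the corollary from three ingredients already in hand: the Jessen--Wintner pure type dichotomy, Lemma~\ref{cor:xi.continuous}, and the theorem just established about the spectrum $S_{\xi}$. No new analysis is really needed; the point is to rule out the discrete and absolutely continuous alternatives.

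First I would invoke the Jessen--Wintner theorem, already cited above, to reduce the problem: the distribution of $\xi$ must be of exactly one of the three pure types (discrete, absolutely continuous, singularly continuous). Next I would rule out the discrete case. Since we assume $D=\prod_{k=1}^\infty \max\{p_{0k},p_{1k}\}=0$, Lemma~\ref{cor:xi.continuous} gives immediately that $\xi$ has a continuous distribution, so the discrete option is excluded.

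To rule out absolute continuity, I would use the preceding theorem, which asserts that $S_{\xi}$ has Hausdorff dimension zero. In particular, $\dim_H(S_{\xi})<1$, so the one-dimensional Lebesgue measure of $S_{\xi}$ is zero. Since $S_{\xi}$ is the topological support of $\mu_{\xi}$ we have $\mu_{\xi}(S_{\xi})=1$, which means $\mu_{\xi}$ is concentrated on a Lebesgue null set. This is incompatible with absolute continuity w.r.t. Lebesgue measure, so only the singularly continuous alternative remains.

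Finally, for the Cantor type description: $S_{\xi}$, being the topological support of a continuous probability measure, is a perfect set (closed, no isolated points, as noted just before the theorem), and by the previous theorem it is nowhere dense. Together these make $S_{\xi}$ a perfect nowhere dense set, i.e. of Cantor type; and the fact $\dim_H(S_{\xi})=0$ is the ``anomalously fractal'' statement. There is no real obstacle here — the only minor point worth being careful about is justifying that $\mu_{\xi}$ is actually concentrated on $S_{\xi}$ (immediate from the definition of the topological support) and that the nowhere denseness plus no isolated points really gives the Cantor description; both are routine.
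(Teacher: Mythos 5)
Your argument is correct and matches the route the paper intends: the corollary is exactly the combination of the Jessen--Wintner pure-type theorem, the continuity criterion $D=0$ from Lemma~\ref{cor:xi.continuous}, and the preceding theorem giving $\dim_H(S_\xi)=0$ (hence Lebesgue-null support, excluding absolute continuity) together with the nowhere dense, perfect spectrum for the Cantor-type description. Nothing is missing and no different machinery is introduced.
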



\section{Properties of  Fourier--Stieltjes transorm of probability distributions generated by the second Ostrogradsky series} \label{sec:characteristic.function}

Let us consider the \emph{characteristic function $f_{\xi}(t)$ of the
random variable $\xi$} (Fourier--Stieltjes transorm of the corresponding probability measure), i.e.,
\[
f_\xi(t)=\Expect\left(e^{it\xi}\right).
\]

It is well known that for a singularly  continuous distribution with the distribution function $F(x)$ one has:
$$\sum\limits_{m=1}^{\infty}{c_{m}}^{2}=\infty,$$
where $c_{m}$ are the Fourier-Stieltjes coefficients of $F(x)$, i.e., $$ c_{m}=\int_{-\infty}^{\infty}e^{2\pi m i x} dF(x) = f_{\xi}(2\pi m).$$
Nevertheless for some classes of singular measures $c_{m}$ can tend to $0$ like for  absolutely continuous distributions.
\begin{lemma}
The characteristic function of random variable $\xi$ defined by (\ref{eq:rand.var.def}) is of the following form
\begin{equation}\label{eq:character.func}
f_\xi(t)=\prod_{k=1}^\infty f_k(t) \quad \text{with} \quad
f_k(t)=p_{0k}+p_{1k}\exp\frac{(-1)^{k-1}it}{q_k},
\end{equation}
and its absolute value is equal to
\[
\abs{f_\xi(t)} = \prod_{k=1}^\infty \abs{f_k(t)}, \quad
\text{where} \quad \abs{f_k(t)} =
\sqrt{1-4p_{0k}p_{1k}\sin^2\frac{t}{2q_k}}.
\]
\end{lemma}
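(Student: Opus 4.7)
The plan is to exploit independence of the summands together with the standard fact that characteristic functions multiply under independent sums, and then pass to the infinite product by a limiting argument. The computation of the modulus is then a straightforward trigonometric identity.

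First I would consider the partial sums $S_n=\sum_{k=1}^{n}\frac{(-1)^{k+1}\xi_k}{q_k}$. Since each summand $\frac{(-1)^{k+1}\xi_k}{q_k}$ is a discrete random variable taking the value $0$ with probability $p_{0k}$ and the value $\frac{(-1)^{k+1}}{q_k}=\frac{(-1)^{k-1}}{q_k}$ with probability $p_{1k}$, its characteristic function is exactly
\[
\Expect\!\left(e^{it(-1)^{k+1}\xi_k/q_k}\right)=p_{0k}+p_{1k}\exp\frac{(-1)^{k-1}it}{q_k}=f_k(t).
\]
By independence of the $\xi_k$ we get $f_{S_n}(t)=\prod_{k=1}^n f_k(t)$.

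Next I would pass to the limit. Because $\sum_{k=1}^\infty\frac{1}{q_k}<\infty$ by property~6 of the denominators, the series defining $\xi$ converges absolutely almost surely, so $S_n\to\xi$ a.s.\ (and hence in distribution); alternatively one may invoke directly the Jessen--Wintner theorem cited above. Since $|e^{itS_n}|\le 1$, the dominated convergence theorem yields $f_{S_n}(t)\to f_\xi(t)$ pointwise in $t$, proving formula~\eqref{eq:character.func}.

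Finally, for the absolute value I would expand
\[
\abs{f_k(t)}^{2}=\left(p_{0k}+p_{1k}\cos\tfrac{t}{q_k}\right)^{2}+\left(p_{1k}\sin\tfrac{t}{q_k}\right)^{2}=p_{0k}^{2}+p_{1k}^{2}+2p_{0k}p_{1k}\cos\tfrac{t}{q_k},
\]
using that the sign $(-1)^{k-1}$ is absorbed by $\cos$ being even and $\sin^{2}$ being even. Since $p_{0k}+p_{1k}=1$, the right-hand side equals $1-2p_{0k}p_{1k}\!\left(1-\cos\tfrac{t}{q_k}\right)=1-4p_{0k}p_{1k}\sin^{2}\tfrac{t}{2q_k}$, which gives the announced expression for $|f_k(t)|$. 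Taking moduli in the infinite product yields $|f_\xi(t)|=\prod_{k=1}^\infty|f_k(t)|$.

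There is no real obstacle here; the only point deserving a line of justification is the interchange of limit and product (handled by dominated convergence together with a.s.\ convergence of $S_n$), and the correct tracking of the sign $(-1)^{k-1}$, which turns out not to affect the modulus.
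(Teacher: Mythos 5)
Your proof is correct and follows essentially the same route as the paper: compute the characteristic function of each independent summand, factor the expectation into the product $\prod_k f_k(t)$, and obtain $|f_k(t)|$ from the same trigonometric identity using $p_{0k}+p_{1k}=1$. The only difference is that you justify the passage to the infinite product explicitly (partial sums, a.s.\ convergence, dominated convergence), a step the paper performs informally.
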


\begin{proof}
Using the definition of a characteristic function and properties
of expectations, we have
\begin{align*}
f_\xi(t) &= \Expect\left(e^{it\xi}\right)=\Expect\left(\exp
\left(it\sum_{k=1}^\infty
\frac{(-1)^{k-1}\xi_k}{q_k}\right)\right)=\\ &=
\Expect\left(\exp\frac{it\xi_1}{q_1}\cdot
\exp\frac{-it\xi_2}{q_2}\cdot\dots\cdot
\exp\frac{(-1)^{k-1}it\xi_k}{q_k}\cdot\dotsm\right)=\\ &=
\prod_{k=1}^\infty \Expect\left(\exp
\frac{(-1)^{k-1}it\xi_k}{q_k}\right)= \prod_{k=1}^\infty
\left(p_{0k}+p_{1k}\exp\frac{(-1)^{k-1}it}{q_k}\right)=\\ &=
\prod_{k=1}^\infty
\left((p_{0k}+p_{1k}\cos\frac{(-1)^{k-1}t}{q_k}) +
i(p_{1k}\sin\frac{(-1)^{k-1}t}{q_k})\right)=\prod_{k=1}^\infty
f_k(t),
\end{align*}
and
\[
\abs{f_k(t)} =
\sqrt{p_{0k}^2+2p_{0k}p_{1k}\cos\frac{t}{q_k}+p_{1k}^2}=
\sqrt{1-4p_{0k}p_{1k}\sin^2\frac{t}{2q_k}},
\]
which proves the lemma.
\end{proof}
\begin{corollary}
The Fourier-Stieltjes coefficients of the distribution function of the random variable $\xi$ defined by equality
(\ref{eq:rand.var.def}) are of the following form $$c_{m}=\prod\limits_{k=1}^{\infty}\left(p_{ok}~+p_{1k}\exp\frac{(-1)^{k-1} 2\pi mi}{q_{k}}\right).$$
\end{corollary}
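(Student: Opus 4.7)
The plan is to observe that this corollary is essentially an immediate substitution into the formula for the characteristic function $f_\xi(t)$ established in the preceding lemma. Specifically, recall from the text just before the lemma that the Fourier--Stieltjes coefficients $c_m$ are defined by
\[
c_m = \int_{-\infty}^{\infty} e^{2\pi m i x}\, dF_\xi(x),
\]
and by the general identity between the Fourier--Stieltjes transform of the distribution and the characteristic function of the associated random variable, one has $c_m = \Expect(e^{2\pi m i \xi}) = f_\xi(2\pi m)$.

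The key step is therefore to set $t = 2\pi m$ in the formula
\[
f_\xi(t) = \prod_{k=1}^{\infty}\left(p_{0k} + p_{1k}\exp\frac{(-1)^{k-1} i t}{q_k}\right)
\]
provided by the previous lemma. This substitution directly yields
\[
c_m = \prod_{k=1}^{\infty}\left(p_{0k} + p_{1k}\exp\frac{(-1)^{k-1} 2\pi m i}{q_k}\right),
\]
which is the claimed identity.

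There is essentially no obstacle here: the convergence of the infinite product has already been guaranteed in the lemma's proof (via the expectation calculation which shifts the product outside of $\Expect$ using independence of the $\xi_k$), and the identity $c_m = f_\xi(2\pi m)$ is just the definition of the Fourier--Stieltjes coefficients of a probability distribution function rewritten in terms of the expectation. So the ``proof'' is a one-line specialization of the lemma, and the only minor thing worth noting explicitly is the use of independence of $\{\xi_k\}$ to obtain the infinite product representation, which was already carried out in the lemma.
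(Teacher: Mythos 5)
Your proposal is correct and is exactly the intended argument: the paper states this corollary without proof precisely because it is the specialization $t=2\pi m$ of the formula $f_\xi(t)=\prod_{k=1}^\infty\bigl(p_{0k}+p_{1k}\exp\frac{(-1)^{k-1}it}{q_k}\bigr)$ from the preceding lemma, combined with the definition $c_m=f_\xi(2\pi m)$ given just before it. Nothing further is needed.
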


Let $l.c.m.(m_1,m_2,...,m_k):= \min\limits_{M \in N} \{M: M\vdots m_1, M\vdots m_2, ..., M\vdots m_k\}.$

\begin{theorem}\label{thm:ck}
For any sequence $\{q_n\}$ generated by the second Ostrogradsky series,  the Fourier-Stieltjes coefficients of the distribution function of the random variable
$\xi$ have the following properties:
\begin{enumerate}[\upshape1)]
\item\label{ck:properties.1}
$\limsup\limits_{k\rightarrow\infty} |c_{k}|  >0;$
\end{enumerate}
\begin{enumerate}[\upshape2)]\label{liminf}
\item\label{ck:properties.2}
if~ \begin{equation}\liminf\limits_{k\rightarrow \infty}
\frac{l.c.m.(q_{1},q_{2},\ldots q_{n})}{q_{n+1}}=0,
\end{equation} ~ then~
$\limsup\limits_{k\rightarrow\infty} |c_{k}|=1;$

\end{enumerate}
\end{theorem}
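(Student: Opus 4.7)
The plan is to use the Corollary above to write
$$|c_m|^2 \;=\; \prod_{k=1}^\infty \Bigl(1 - 4 p_{0k} p_{1k} \sin^2 \frac{\pi m}{q_k}\Bigr)$$
(since $c_m = f_\xi(2\pi m)$) and to evaluate at the distinguished indices $m = M_n := l.c.m.(q_1,\dots,q_n)$. For each $k \leq n$ the argument $\pi M_n/q_k$ is an integer multiple of $\pi$, so the first $n$ factors are equal to $1$ and
$$|c_{M_n}|^2 \;=\; \prod_{k=n+1}^\infty \Bigl(1 - 4 p_{0k} p_{1k} \sin^2 \frac{\pi M_n}{q_k}\Bigr);$$
both claims will then follow from appropriate lower bounds on this tail product, uniform in $n$ for part 1 and sharp along a subsequence for part 2.

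I would assemble two ingredients. (i) For the factor at $k = n+1$: property 7 of the denominators gives $M_n/q_{n+1} \leq q_1 \cdots q_n/q_{n+1} < 2/7$ for every $n \geq 2$, and since $2\pi/7 < \pi/2$ and $\sin$ is monotone on $[0,\pi/2]$, this yields the uniform bound $\sin^2(\pi M_n/q_{n+1}) \leq \sin^2(2\pi/7) < 1$, so the $k=n+1$ factor is at least $\cos^2(2\pi/7) > 0$. (ii) For the tail $k \geq n+2$: iterating property 4 one gets $q_{n+j} \geq q_{n+1}^{2^{j-1}}$ for $j \geq 1$; combined with $\sin^2 x \leq x^2$ and $q_{n+1} \geq 2$, this produces an estimate of the shape
$$\sum_{k=n+2}^\infty \sin^2\frac{\pi M_n}{q_k} \;\leq\; \frac{C}{q_{n+1}^2}\Bigl(\frac{M_n}{q_{n+1}}\Bigr)^2$$
for an absolute constant $C$, and then $\prod(1-a_k)\geq 1 - \sum a_k$ (valid when $a_k\in[0,1]$) bounds the $k \geq n+2$ sub-product below by $1 - C(M_n/q_{n+1})^2/q_{n+1}^2$.

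Part 1 then follows by multiplying the bounds from (i) and (ii): using $M_n/q_{n+1}< 2/7$ and $q_{n+1}\geq 2$, the resulting estimate $|c_{M_n}|^2 \geq \cos^2(2\pi/7)\bigl(1 - C/49\bigr)$ is a positive constant independent of $n \geq 2$, so $\limsup_k|c_k| > 0$. For part 2, the hypothesis furnishes a subsequence $n_\ell$ with $M_{n_\ell}/q_{n_\ell+1} \to 0$; along it one has $\sin^2(\pi M_{n_\ell}/q_{n_\ell+1}) \to 0$ by continuity, and by (ii) the tail contribution also tends to $0$, so every factor in the product for $|c_{M_{n_\ell}}|^2$ converges to $1$ and hence $|c_{M_{n_\ell}}| \to 1$. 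Combined with the trivial upper bound $|c_m|\leq 1$ (modulus of a characteristic function), this gives $\limsup_k |c_k|= 1$.

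The main technical obstacle is that property 7 on its own only yields the uniform upper bound $M_n/q_{n+1} < 2/7$ rather than convergence to zero, so the factor at $k=n+1$ cannot be made close to $1$ uniformly in $n$; the extra smallness required to close part 1 must therefore come from elsewhere, namely from the doubly-exponential growth of the tail denominators in property 4, which supplies the additional $1/q_{n+1}^2$ factor beyond the natural $(M_n/q_{n+1})^2$ in the tail estimate and so rescues the product bound.
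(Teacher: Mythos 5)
Your proposal is correct and follows essentially the same route as the paper: test the product at $m=\mathrm{l.c.m.}(q_1,\dots,q_n)$ so that the first $n$ factors equal $1$, control the $(n+1)$-st factor via property 7 ($q_1\cdots q_n/q_{n+1}<2/7$), control the tail $k\ge n+2$ via the doubly exponential growth of $q_k$ together with $\prod(1-a_k)\ge 1-\sum a_k$, and pass to the subsequence given by the $\liminf$ hypothesis for part 2. Your only deviations are cosmetic (you use $\sin^2x\le x^2$ and $\cos^2(2\pi/7)$ where the paper uses $\sin^2x\le\sin x\le x$ and $1-2\pi/7$), and the one detail to make explicit is that your unspecified constant $C$ (one may take $C=4\pi^2/3$) indeed satisfies $C/49<1$, so the part 1 lower bound is genuinely positive.
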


\begin{proof}
From the definition follows that $c_{k}=f_{\xi}(2\pi k).$

1.If  $k_n=l.c.m.(q_1,q_2,\dots,q_n)$, then $c_{k_{n}}=f_{\xi}(l.c.m.(q_1,q_2,\dots,q_n)2\pi)$.
Let us estimate
$$|c_{k_{n}}|=|f_{\xi}(2\pi k_{n})|=\prod_{k=1}^\infty
\sqrt{1-4p_{0k}p_{1k}\sin^2\frac{ \pi k_{n}}{q_k}}
\geq \prod_{k=1}^\infty \sqrt{1-\sin^2\frac{\pi k_n}{q_k}} =$$ $$=\prod_{k=1}^\infty
\sqrt{1-\sin^2\frac{l.c.m.(q_{1},q_{2},\ldots,
q_{n})\pi}{q_k}}=\sqrt{\prod_{k=1}^\infty
{1-\sin^2\frac{l.c.m.(q_{1},q_{2},\ldots,
q_{n})\pi}{q_k}}}\geq$$ $$\geq\prod_{k=1}^\infty
\left({1-\sin^2\frac{l.c.m.(q_{1},\ldots,
q_{n})\pi}{q_k}}\right)=1\cdot \ldots \cdot 1\cdot\prod_{k=n+1}^\infty
\left({1-\sin^2\frac{l.c.m.(q_{1}, \ldots ,
q_{n})\pi}{q_k}}\right)\geq$$ $$\geq\prod_{k=n+1}^\infty
\left({1-\sin\frac{l.c.m.(q_{1},q_{2},\ldots,
q_{n})\pi}{q_k}}\right)\geq\prod_{k=n+1}^\infty
\left({1-\frac{l.c.m.(q_{1},q_{2},\ldots,
q_{n})\pi}{q_k}}\right)=A.$$
Since $$\frac{q_1}{q_{2}+1+\frac{1}{q_{2}}}\leq \frac{2}{7},~~\forall~q_{1}\in N; ~q_{2}\in N, \mbox{with} ~q_{2}\geq q_1(q_1+1),$$ we have $$\frac{l.c.m.(q_{1},q_{2},\ldots,q_{n})\pi}{q_{n+1}}\leq
\frac{q_{1}\cdot q_{2}\cdot\ldots\cdot q_{n}\cdot\pi}{q_{n+1}}\leq\frac{q_1 \pi}{q_{2}+1+\frac{1}{q_{2}}}\leq\frac{2 \pi}{7}.$$

It is well known that  $$(1-b_{1})(1-b_{2}) \cdot \ldots \cdot (1-b_{k}) \cdot ... \geq 1-(b_{1}+\ldots+b_{k}+ ...)$$ for any sequence $\{b_k\}$ with $0<b_{k}<1$. So,
 $$ \prod\limits_{k=n+2}^{\infty}\left(1-\frac{l.c.m.(q_{1},q_{2},\ldots,q_{n})\pi}{q_{k}}\right)\geq 1-\sum_{k=n+2}^{\infty}\left(\frac{l.c.m.(q_{1},q_{2},\ldots,q_{n})\pi}{q_{k}}\right)=$$
$$=1-\sum\limits_{k=n+2}^{\infty}\frac{l.c.m.(q_{1},q_{2},\ldots,q_{n})\pi}{q_{n+1}}\cdot\frac{q_{n+1}}{q_{k}}\geq 1-\frac{\pi}{3}\cdot
\sum\limits_{k=n+2}^{\infty}\frac{q_{n+1}}{q_{k}}=B.$$
Since $$\frac{q_{n+1}}{q_{n+2}}\leq \frac{1}{q_{n+1}}<\frac{1}{2^{2^{n-1}}};$$
$$\frac{q_{n+1}}{q_{n+m}}=\frac{q_{n+1}}{q_{n+2}}\cdot\frac{q_{n+2}}{q_{n+3}}\cdot\ldots\cdot\frac{q_{n+m-1}}{q_{n+m}}< \frac{1}{2^{2^{n-1}}}\cdot\frac{1}{2^{2^{n}}}\cdot\ldots\cdot\frac{1}{2^{2^{n+m-3}}}\leq\frac{1}{2^{2^{n-1}}}\cdot\frac{1}{2^{m-2}},$$
we get $$B\geq 1-\frac{\pi}{3}\cdot\frac{1}{2^{2^{n-1}}}\left(1+\frac{1}{2}+\frac{1}{4}+\ldots\right)=1-\frac{2\pi}{3\cdot2^{2^{n-1}}}.$$
Therefore, $$A\geq\left(1-\frac{2\pi}{7}\right)\left(1-\frac{2\pi}{3\cdot2^{2^{n-1}}}\right)
>\left(1-\frac{2\pi}{7}\right)\left(1-\frac{\pi}{6}\right),~~\forall~n\in N.$$
Hence, $$\limsup\limits_{k\rightarrow\infty}|c_{k}|  \geq \limsup_{n\rightarrow\infty}|c_{k_{n}}|\geq \left(1-\frac{2\pi}{7}\right)\left(1-\frac{\pi}{6}\right)>0~.$$

2. If condition (\ref{liminf}) holds, then
there exists a sequence $\{n_{s}\}$ of positive integers such that $$\liminf_{n\rightarrow \infty} \frac{l.c.m.(q_{1},q_{2},\ldots q_{n_{s}})}{q_{n_{s}+1}}=0.$$
Let us
consider the sequence $k_{n_{s}}= l.c.m.(q_{1},q_{2},\ldots,
q_{n_{s}})$.
Using our previous arguments, we have
$$|c_{k_{n_{s}}}|\geq\left(1-\frac{l.c.m.(q_{1},q_{2},\ldots,q_{n_{s}})}{q_{n_{s}+1}}\right)\cdot\left(1-\frac{\pi}{3}\cdot
\frac{2}{2^{2^{n_{s}-1}}}\right)\rightarrow 1~~(s\rightarrow\infty).$$
Therefore,
$$\limsup\limits_{k\rightarrow\infty}|c_{k}| =1.$$\end{proof}

 For a given random variable
$\zeta$ one can define
\[
L_\xi=\limsup_{\abs{t}\to\infty}\abs{f_\xi(t)}.
\]
It is well known~
that $L_\zeta=1$ for any discretely distributed random variable $\zeta$. If $\zeta$ has an absolutely continuous distribution, then $L_\zeta=0$.
 If $\zeta$ has a singularly continuous distribution, then $L_\zeta \in [0,1].$ More precisely: for
any real number $\beta \in[0,1]$ there exists a singularly distributed random
variable $\zeta_\beta$ such that $L_{\zeta_\beta}=\beta$.
 Let us stress the asymptotic behaviour at infinity of the absolute value of the
characteristic function of the random variable $\xi$,defined by (\ref{eq:rand.var.def}).
\begin{corollary}
For any sequence $\{q_n\}$ generated by the second Ostrogradsky series, we have
 $$L_{\xi}>0.$$
 If $~~~~~ \liminf\limits_{k \to \infty}\frac{l.c.m.(q_{1},q_{2},\ldots q_{n})}{q_{n+1}}=0,~~~~$ then $L_{\xi}=1.$
\end{corollary}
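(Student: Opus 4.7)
The corollary is essentially an immediate transfer of Theorem~\ref{thm:ck} from the discrete sampling $c_k = f_\xi(2\pi k)$ to the continuous limsup $L_\xi = \limsup_{|t|\to\infty}|f_\xi(t)|$, together with the trivial upper bound $|f_\xi(t)|\le 1$ that holds for any characteristic function.

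The plan is as follows. First I would record the identity $c_k = f_\xi(2\pi k)$ already noted in the proof of Theorem~\ref{thm:ck}. Since the sequence $t_k := 2\pi k$ tends to infinity along the positive integers, any subsequence of $\{|c_k|\}$ provides a lower bound for $L_\xi$; in particular
\[
L_\xi \;=\; \limsup_{|t|\to\infty}|f_\xi(t)| \;\ge\; \limsup_{k\to\infty}|f_\xi(2\pi k)| \;=\; \limsup_{k\to\infty}|c_k|.
\]
Combining this with assertion 1) of Theorem~\ref{thm:ck} gives $L_\xi \ge (1-\tfrac{2\pi}{7})(1-\tfrac{\pi}{6}) > 0$, which proves the first claim.

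For the second claim, under the hypothesis $\liminf_{n\to\infty}\frac{l.c.m.(q_1,\dots,q_n)}{q_{n+1}}=0$, assertion 2) of Theorem~\ref{thm:ck} yields a subsequence $k_{n_s}:=l.c.m.(q_1,\dots,q_{n_s})\to\infty$ with $|c_{k_{n_s}}|\to 1$. Hence $|f_\xi(2\pi k_{n_s})|\to 1$ along this subsequence, giving $L_\xi \ge 1$. On the other hand, $|f_\xi(t)|\le 1$ for every $t$ (as $f_\xi$ is a characteristic function, being a product of factors $f_k(t)$ each of modulus at most one), so $L_\xi \le 1$. Combining both inequalities we conclude $L_\xi=1$.

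I do not anticipate a real obstacle here: all the analytic work has already been carried out in Theorem~\ref{thm:ck} (the estimates of infinite products via $(1-b_1)(1-b_2)\cdots \ge 1-\sum b_k$ and the bound $q_{n+1}/q_{n+m}\le 2^{-2^{n-1}}2^{-(m-2)}$). The only subtlety worth emphasising is directional: the limsup over all real $t$ is compared from below with a limsup along the integer sequence $2\pi k$, and from above by the universal bound $|f_\xi|\le 1$; nothing more is required.
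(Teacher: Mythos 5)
Your proposal is correct and follows essentially the same route as the paper: the paper's proof is precisely the observation that $L_\xi \ge \limsup_{k\to\infty}|f_\xi(2\pi k)| = \limsup_{k\to\infty}|c_k|$, after which both conclusions follow from statements 1) and 2) of the theorem. Your additional explicit remark that $|f_\xi(t)|\le 1$ (so $L_\xi\le 1$) is left implicit in the paper but is the same standard fact, so there is no substantive difference.
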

\begin{proof}
 It is clear that  $$L_{\xi}=\limsup\limits_{|t|\rightarrow\infty}|f_{\xi}(t)|\geq \limsup\limits_{k\rightarrow\infty}|f_{\xi}(2\pi k)|=\limsup\limits_{k\rightarrow\infty}|c_{k}|,$$
and the statement of corollary follows directly  from statements 1) and 2) of the latter theorem.
\end{proof}

\section{Convolutions of singular distributions generated by the second Ostrogradsky series}
It is well known (see, e.g., \cite{Luk70}) that for a random variable $\zeta$ which is a sum of two independent random variables $\zeta_1$ and $\zeta_2$ one has $F_{\zeta} = F_{\zeta_1}*F_{\zeta_2}  $ and  $f_{\zeta} = f_{\zeta_1} \cdot f_{\zeta_2} , $ where $F_{\zeta}$ resp. $f_{\zeta}$ means the probability distribution function resp. characteristic function of the random variable $\zeta$. If either $\zeta_1$ or $\zeta_2$ has an absolutely continuous distribution then $\zeta$ also has a density. If both $\zeta_1$ and  $\zeta_2$ has singular distribution then, generally speaking, nothing known about the distribution of  $\zeta$. The convolution of two singular probability distributions is either
singular or absolutely continuous, or is of a mixed type. An absolutely continuous distribution can arise even as a convolution of two anomalously fractal singularly continuous distributions.
 The desirability of finding a criterium for the singularity resp. absolute continuity of the
convolution of two singular distributions has been expressed by many authors (see, e.g., \cite{Luk70}), but it is still unknown. It can however be given for special classes of random variables (see, e.g., \cite{AGPT} and references therein).

In this Section we study finite as well as infinite convolutions of probability distributions generated by the second Ostrogradsky series.

\subsection{Autoconvolutions}
\begin{theorem}
 Let $$\xi^{(j)}=\sum_{k=1}^{\infty}\frac{(-1)^{k+1} \cdot \xi_{k}^{(j)}}{q_{k}},$$ where $\{q_{k}\}$ is a sequence of positive integers,
$q_{k+1}\geq q_{k}(q_{k}+1),$  let $\{\xi_{k}^{(j)}\}$ be  sequences of random variables taking  values $0$ and $1$ with
probabilities $p_{0k}$ and $p_{1k}$ correspondingly.
Let $\{\xi^{(j)}\}$ be mutually independent random variables and $$\psi=\sum_{j=1}^{m}\xi^{(j)}.$$
Then the random variable $\psi$ has a singular distribution with
$$\dim_{H}\left(S_{\psi}\right)=0$$ and $$L_{\psi}=\limsup_{|t|\rightarrow\infty}|f_{\psi}(t)|>0.$$
\end{theorem}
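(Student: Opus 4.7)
The plan rests on two structural observations. First, because the $\xi^{(j)}$ are i.i.d.\ copies of the random variable $\xi$ studied earlier, independence gives
\[
f_\psi(t) = \prod_{j=1}^m f_{\xi^{(j)}}(t) = \bigl(f_\xi(t)\bigr)^m,
\]
and interchanging the order of summation,
\[
\psi = \sum_{k=1}^\infty \frac{(-1)^{k+1}}{q_k}\sum_{j=1}^m \xi_k^{(j)} = \sum_{k=1}^\infty \frac{(-1)^{k+1}\eta_k}{q_k}, \qquad \eta_k := \sum_{j=1}^m \xi_k^{(j)} \in \{0,1,\dots,m\}.
\]
So $\psi$ admits an Ostrogradsky-type representation with alphabet inflated from $\{0,1\}$ to $\{0,1,\dots,m\}$.

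First I would bound the Hausdorff dimension of $S_\psi$ directly from the second displayed expression. The spectrum is contained in the set of all series $\sum_k (-1)^{k+1} e_k / q_k$ with $e_k \in \{0,1,\dots,m\}$, which at rank $n$ is covered by $(m+1)^n$ cylindrical intervals, each of diameter at most $\sum_{k>n} m/q_k < 2m/q_{n+1}$ by property 6 of the Ostrogradsky denominators. For every $s>0$, the $s$-dimensional Hausdorff premeasure of $S_\psi$ is therefore dominated by $(m+1)^n\bigl(2m/q_{n+1}\bigr)^s$, and the doubly-exponential lower bound $q_{n+1}\ge 2^{2^{n-1}}$ (property 5) forces this quantity to $0$ as $n\to\infty$. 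Hence $\mathcal H^s(S_\psi)=0$ for all $s>0$, so $\dim_H(S_\psi)=0$, and in particular $S_\psi$ has Lebesgue measure zero. Since we are in the continuous regime $D=0$ and the convolution of continuous distributions is continuous, $\psi$ is continuous; combined with $|S_\psi|=0$ this yields that $\psi$ is singularly continuous with respect to Lebesgue measure.

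Next, for the Fourier-Stieltjes estimate, monotonicity and continuity of $x\mapsto x^m$ on $[0,1]$ together with $|f_\psi(t)|=|f_\xi(t)|^m$ give
\[
L_\psi = \limsup_{|t|\to\infty}|f_\xi(t)|^m = \bigl(L_\xi\bigr)^m.
\]
By the Corollary to Theorem~\ref{thm:ck} we already know $L_\xi > 0$ for every second Ostrogradsky sequence, whence $L_\psi > 0$. (The same factorisation would even give $L_\psi = 1$ whenever $\liminf_n \mathrm{l.c.m.}(q_1,\dots,q_n)/q_{n+1} = 0$.)

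The only delicate point is the cover estimate: after enlarging the alphabet the number of cylinders grows like $(m+1)^n$ rather than $2^n$, so one must check that the doubly-exponential denominator growth still swallows it, which it does uniformly in $m$ thanks to property 5. An alternative route would be to write $S_\psi \subseteq \sum_{j=1}^m S_{\xi^{(j)}}$ and invoke a general upper-box-dimension inequality for Minkowski sums, but the direct multi-digit cylinder count is cleaner and sidesteps any appeal to ambient results.
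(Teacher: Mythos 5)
Your proposal is correct and takes essentially the same route as the paper: you cover $S_\psi$ by $(m+1)^n$ cylindrical intervals of length at most $2m/q_{n+1}$ (the paper uses $4m/q_{n+1}$) and let the doubly exponential bound $q_{n+1}\ge 2^{2^{n-1}}$ annihilate the $s$-dimensional premeasure, and you obtain the Fourier statement from the factorisation $f_\psi=(f_\xi)^m$ together with $L_\xi>0$, exactly as in the paper's proof. Your explicit remark on continuity and Lebesgue-singularity via $\lambda(S_\psi)=0$ is a small addition that the paper leaves implicit, but it does not change the argument.
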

\begin{proof}
If $\xi_{1}^{(j)}=i_{1}^{(j)},~\xi_{2}^{(j)}=i_{2}^{(j)}, \ldots, \xi_{n}^{(j)}=i_{n}^{(j)},~~
 i_{s}^{(j)}\in\{0,1\},~s=\overline{1,n},j=\overline{1,m},$ then
 $$\sum\limits_{s=1}^{n}\frac{(-1)^{s+1}\cdot i_{s}^{(1)}}{q_{s}}+\ldots+\sum\limits_{s=1}^{n}\frac{(-1)^{s+1}\cdot i_{s}^{(m)}}{q_{s}}-
 \left[\sum\limits_{s=n+1}^{\infty}\frac{1}{q_{s}}+\ldots+\sum\limits_{s=n+1}^{\infty}\frac{1}{q_{s}}\right]\leq $$ $$\leq\xi^{(1)}+\xi^{(2)}+\ldots+\xi^{(m)}
 \leq$$
 $$\leq\sum\limits_{s=1}^{n}\frac{(-1)^{s+1}\cdot i_{s}^{(1)}}{q_{s}}+\ldots+\sum\limits_{s=1}^{n}\frac{(-1)^{s+1}\cdot i_{s}^{(m)}}{q_{s}}+
 \left[\sum\limits_{s=n+1}^{\infty}\frac{1}{q_{s}}+\ldots+\sum\limits_{s=n+1}^{\infty}\frac{1}{q_{s}}\right]$$
 Taking into account that  $\sum\limits_{s=n+1}^{\infty}\frac{1}{q_{s}}<\frac{2}{q_{n+1}}$ (see property 6 of denominators of the Ostrogradsky series), we conclude that
 $$ \sum\limits_{s=1}^{n}\frac{(-1)^{s+1}\cdot i_{s}^{(1)}}{q_{s}}+\ldots+\sum\limits_{s=1}^{n}\frac{(-1)^{s+1}\cdot i_{s}^{(m)}}{q_{s}}- m \cdot \frac{2}{q_{n+1}} \leq \psi \leq$$ $$\leq \sum\limits_{s=1}^{n}\frac{(-1)^{s+1}\cdot i_{s}^{(1)}}{q_{s}}+\ldots+\sum\limits_{s=1}^{n}\frac{(-1)^{s+1}\cdot i_{s}^{(m)}}{q_{s}} + m \cdot \frac{2}{q_{n+1}}$$

The random variable  $\xi^{(1)}_k+\xi^{(2)}_k+\ldots+\xi^{(m)}_k$  takes values from the set $\{0,1,\ldots,m\}$. So,  for any $n\in N$ the spectrum $S_{\psi}$ of the random variable $\psi$ can be covered by  $(m+1)^{n}$ intervals of length $\frac{4m}{q_{n+1}}$.
The $\alpha-$ volume of this covering  is equal to
$$(m+1)^{n}\cdot\left(\frac{4m}{q_{n+1}}\right
)^{\alpha}=\left(4m\right)^{\alpha}\cdot\frac{\left(m+1\right)^{n}}{{q_{n+1}}^{\alpha}}
 \leq \left(4m\right)^{\alpha}\cdot\frac{\left(m+1\right)^{n}}{\left(2^{2^{n-2}}\right)^{\alpha}}\rightarrow 0 ~~ (n\rightarrow\infty),~ \forall m \in \N. $$
So,
$$H^{\alpha}\left(S_{\psi}\right)=0,~~\forall~\alpha>0.$$
Hence,
$$\dim_{H}\left(S_{\psi}\right)=\inf\left\{\alpha:~H^{\alpha}\left(S_{\psi}\right)=0\right\}=0.$$
Since the random variable $\xi^{(1)}, \xi^{(2)}, ..., \xi^{(m)} $ are mutually independent, from general properties of the Fourier-Stieltjes transform it follows that
 $$f_{\psi}(t)=\prod\limits_{j=1}^{m} f_{\xi}^{(j)}(t)=\left(f_{\xi}^{(j)}\right)^{m}=\left(f_{\xi}(t)\right)^{m}.$$
Applying results of theorem \ref{thm:ck}, we get the desired statement:  $$\limsup_{|t|\rightarrow 0}|f_{\psi}(t)|=\limsup_{|t|\rightarrow 0}|f_{\xi}(t)|^{m}=
\left(\limsup_{|t|\rightarrow 0}|f_{\xi}(t)|\right)^{m}>0.$$
\end{proof}

\begin{remark*}

1) From the proof of the latter theorem it follows that the random variable $\psi = \xi^{(1)}+...+\xi^{(k)}$ has an anomalously fractal singular  distribution even in the case where random variables  $\xi^{(1)}, ...,\xi^{(k)}$ are not independent.

2)  It is impossible to consider infinite autoconvolutions, because the resulting random series will diverge almost surely.

\end{remark*}

\subsection{General convolutions of  singular distributions generated by the second Ostrogradsky series}
Let $\{q^{(j)}_{k}\}$ be  sequences of positive integers,
$q^{(j)}_{k+1}\geq q^{(j)}_{k}(q^{(j)}_{k}+1),$  let $\{\xi_{k}^{(j)}\}$ be  sequences of random variables taking  values $0$ and $1$ with
probabilities $p^{(j)}_{0k}$ and $p^{(j)}_{1k}$ correspondingly, and let $$\xi^{(j)}=\sum_{k=1}^{\infty}\frac{(-1)^{k+1} \cdot \xi_{k}^{(j)}}{q^{(j)}_{k}}$$.

\begin{theorem}
If \begin{equation}\label{general convolutions}
\widetilde{\psi}=\sum_{j=1}^{m}\xi^{(j)},
\end{equation}
then the random variable $\widetilde{\psi}$ has an anomalously fractal  singular distribution.
\end{theorem}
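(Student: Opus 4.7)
The plan is to adapt the covering argument from the preceding autoconvolution theorem, accounting for the fact that each $\xi^{(j)}$ now uses its own Ostrogradsky sequence $\{q^{(j)}_k\}$. Since each $\{q^{(j)}_k\}$ is itself an Ostrogradsky sequence, properties 5 and 6 of the denominators apply to it individually, so the tail bound $\sum_{s>n} 1/q^{(j)}_s < 2/q^{(j)}_{n+1}$ and the double-exponential lower bound $q^{(j)}_{n+1}\ge 2^{2^{n-1}}$ are both available for every $j\in\{1,\dots,m\}$.

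First, I would truncate each $\xi^{(j)}$ at level $n$, writing $\xi^{(j)}=T^{(j)}_n+R^{(j)}_n$, where $T^{(j)}_n=\sum_{s=1}^n (-1)^{s+1}\xi^{(j)}_s/q^{(j)}_s$ takes at most $2^n$ distinct values and $|R^{(j)}_n|<2/q^{(j)}_{n+1}$. Summing over $j$, the random variable $\widetilde{\psi}$ lies within distance $\sum_{j=1}^m 2/q^{(j)}_{n+1}$ of the finite-valued sum $\sum_{j=1}^m T^{(j)}_n$, which takes at most $2^{mn}$ distinct values. Consequently $S_{\widetilde{\psi}}$ is covered by at most $2^{mn}$ closed intervals, each of length at most $4\sum_{j=1}^m 1/q^{(j)}_{n+1}\le 4m/2^{2^{n-1}}$.

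Next I would evaluate the $\alpha$-volume of this covering, which is at most $(4m)^{\alpha}\cdot 2^{mn-\alpha\cdot 2^{n-1}}$; this tends to $0$ as $n\to\infty$ for every fixed $\alpha>0$ and every fixed $m$, since the double-exponential term in the exponent dominates the linear one. Hence $H^{\alpha}(S_{\widetilde{\psi}})=0$ for all $\alpha>0$, which gives $\dim_H(S_{\widetilde{\psi}})=0$. In particular $S_{\widetilde{\psi}}$ is a Lebesgue-null set, so the distribution of $\widetilde{\psi}$ is singular with respect to Lebesgue measure and anomalously fractal.

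The main obstacle, if any, is conceptual rather than technical: one must notice that the double-exponential lower bound on the denominators is a property of each Ostrogradsky sequence taken separately, so the presence of $m$ different sequences does not damage the estimate. The number of cylinders of depth $n$ grows only as $2^{mn}$, while the reciprocal of their diameter grows as $2^{2^{n-1}}$, and this asymptotic gap is what forces $\dim_H(S_{\widetilde{\psi}})=0$ irrespective of how the $m$ sequences $\{q^{(j)}_k\}$ are chosen. No information about the probability matrices $\{p^{(j)}_{ik}\}$ is needed for the Hausdorff dimension claim beyond the obvious inclusion of $S_{\widetilde{\psi}}$ in the set of all admissible partial sums.
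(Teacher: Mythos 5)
Your proposal is correct and follows essentially the same route as the paper: truncation of each $\xi^{(j)}$ at level $n$, the tail bound $\sum_{s>n}1/q^{(j)}_s<2/q^{(j)}_{n+1}$ together with $q^{(j)}_{n+1}\geq 2^{2^{n-1}}$ applied to each sequence separately, a covering of $S_{\widetilde{\psi}}$ by $2^{mn}$ intervals of length of order $4m/2^{2^{n-1}}$, and the vanishing $\alpha$-volume giving $\dim_H(S_{\widetilde{\psi}})=0$ and hence singularity. No substantive difference from the paper's argument.
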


\begin{proof}
If $\xi_{1}^{(j)}=i_{1}^{(j)},~\xi_{2}^{(j)}=i_{2}^{(j)}, \ldots, \xi_{n}^{(j)}=i_{n}^{(j)},~~
 i_{s}^{(j)}\in\{0,1\},~s=\overline{1,n},j=\overline{1,k},$ then
 $$\sum\limits_{s=1}^{n}\frac{(-1)^{s+1}\cdot i_{s}^{(1)}}{q^{(1)}_{s}}+\ldots+\sum\limits_{s=1}^{n}\frac{(-1)^{s+1}\cdot i_{s}^{(k)}}{q^{(m)}_{s}}-
 \left[\sum\limits_{s=n+1}^{\infty}\frac{1}{q^{(1)}_{s}}+\ldots+\sum\limits_{s=n+1}^{\infty}\frac{1}{q^{(m)}_{s}}\right]\leq $$ $$\leq\xi^{(1)}+\xi^{(2)}+\ldots+\xi^{(k)}
 \leq$$
 $$\leq\sum\limits_{s=1}^{n}\frac{(-1)^{s+1}\cdot i_{s}^{(1)}}{q^{(1)}_{s}}+\ldots+\sum\limits_{s=1}^{n}\frac{(-1)^{s+1}\cdot i_{s}^{(k)}}{q^{(m)}_{s}}+
 \left[\sum\limits_{s=n+1}^{\infty}\frac{1}{q^{(1)}_{s}}+\ldots+\sum\limits_{s=n+1}^{\infty}\frac{1}{q^{(m)}_{s}}\right].$$
 It is clear that $$\sum\limits_{s=n+1}^{\infty}\frac{1}{q^{(1)}_{s}}+\ldots+\sum\limits_{s=n+1}^{\infty}\frac{1}{q^{(m)}_{s}} \leq m \cdot \max\{\sum\limits_{s=n+1}^{\infty}\frac{1}{q^{(1)}_{s}}, ..., \sum\limits_{s=n+1}^{\infty}\frac{1}{q^{(m)}_{s}}\} \leq $$

 $$
 m \cdot \max\{\frac{2}{q^{(1)}_{n+1}}, ..., \frac{2}{q^{(m)}_{n+1}}\} \leq m \cdot \frac{2}{2^{2^{n-1}}}.
 $$

Since the random variable  $\frac{  \xi_{k}^{(1)}}{q^{(1)}_{k}} +  \ldots + \frac{\xi_{k}^{(m)}}{q^{(m)}_{k}}$  takes at most $2^m$ values, we conclude that the spectrum $S_{\widetilde{\psi}}$ of the random variable $\widetilde{\psi}$ can be covered by  $2^{mn}$ intervals of length $m \cdot \frac{4}{2^{2^{n-1}}}$.
The $\alpha-$ volume of this covering  is equal to
$$2^{mn}\cdot\left(\frac{4m}{2^{2^{n-1}}}\right)^{\alpha} \rightarrow 0 ~~ (n\rightarrow\infty),~ ~ \forall \alpha >0, ~\forall m \in \N.   $$
So, the Hausdorff measure $H^{\alpha}(S_{\widetilde{\psi}})$ of the spectrum of $\widetilde{\psi}$ is equal to zero for any positive $\alpha$. Therefore, $\dim_{H}\left(S_{\psi}\right)=0.$
\end{proof}

Let us now consider the case where
\begin{equation}\label{cond.infinite convolutions}
\sum\limits_{j=1}^{\infty} \frac{1}{q_k^{(j)}} < +\infty.
\end{equation}
In such a case the random variable
\begin{equation}\label{infinite convolutions}
\widetilde{\psi}_{\infty}=\sum_{j=1}^{\infty}\xi^{(j)} = \sum\limits_{j=1}^{\infty} \left(\sum\limits_{k=1}^{\infty} \frac{(-1)^{k+1} \xi_k^{(j)}}{q_1^{(j)}}\right)
\end{equation}
is correctly defined and it has a bounded spectrum:
  $S_{\widetilde{\psi}_{\infty}} \subset \left[- \sum\limits_{j=1}^{\infty} \frac{1}{q_1^{(j)}}, ~ \sum\limits_{j=1}^{\infty} \frac{1}{q_1^{(j)}}\right] $.

  \begin{theorem}
  The random variable $\widetilde{\psi}_{\infty}$ is of pure type, i.e., it is either purely discretely distributed or purely singularly continuously resp. purely absolutely continuously distributed.
  It has a pure discrete distribution if and only if
  \begin{equation}
  \prod\limits_{j=1}^{\infty} \left(\prod\limits_{k=1}^{\infty} \max_{k} \{p^{(j)}_{0k}, p^{(j)}_{1k}\}\right) >0.
  \end{equation}
\end{theorem}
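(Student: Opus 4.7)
My plan is to follow the strategy already used for Lemma \ref{cor:xi.continuous}: I will unwrap $\widetilde{\psi}_{\infty}$ into a single series of scalar multiples of independent two-valued random variables, and then invoke the Jessen--Wintner purity theorem together with P.~L\'evy's discreteness criterion.

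First I will check that $\widetilde{\psi}_{\infty}$ is well defined as an absolutely convergent double series. Property 6 of second Ostrogradsky denominators yields $\sum_{k=1}^{\infty} 1/q_k^{(j)} \leq 2/q_1^{(j)}$ for every $j$, so hypothesis (\ref{cond.infinite convolutions}) gives $\sum_{j=1}^{\infty}\sum_{k=1}^{\infty} 1/q_k^{(j)} < \infty$. Consequently the double series $\sum_{j,k}(-1)^{k+1}\xi_k^{(j)}/q_k^{(j)}$ converges absolutely almost surely, and for any bijection $n \mapsto (j_n,k_n)$ between $\N$ and $\N \times \N$ one may rewrite $\widetilde{\psi}_{\infty} = \sum_{n=1}^{\infty} b_n \eta_n$, where $b_n := (-1)^{k_n+1}/q_{k_n}^{(j_n)}$, the variables $\eta_n := \xi_{k_n}^{(j_n)}$ form an independent sequence of two-valued random variables, and $\sum_n |b_n| < \infty$.

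With this representation at hand, the Jessen--Wintner purity theorem (\cite{JW35}) applies to the series $\sum_n b_n \eta_n$ and immediately delivers the first assertion: $\widetilde{\psi}_{\infty}$ is purely discrete, purely singularly continuous, or purely absolutely continuous. For the second assertion, I will appeal to P.~L\'evy's criterion \cite{Lev31} (exactly the one used in Lemma \ref{cor:xi.continuous}): the sum is purely discrete if and only if $\prod_n \max\{\Prob(\eta_n=0),\Prob(\eta_n=1)\} > 0$, which in the original indexing is the unordered double product $\prod_{(j,k) \in \N \times \N} \max\{p_{0k}^{(j)}, p_{1k}^{(j)}\}$.

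The main technical step will be identifying this unordered double product with the iterated product $\prod_j \prod_k \max\{p_{0k}^{(j)}, p_{1k}^{(j)}\}$ appearing in the statement. Here I plan to take logarithms: since every factor lies in $[1/2,1]$, positivity of the product is equivalent to convergence of the non-negative double series $\sum_{(j,k)} \bigl(1 - \max\{p_{0k}^{(j)}, p_{1k}^{(j)}\}\bigr)$. Tonelli's theorem then gives $\sum_{(j,k)} = \sum_{j}\sum_{k}$, so unordered convergence is equivalent to iterated convergence, and exponentiating yields the criterion as stated.
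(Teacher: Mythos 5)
Your proof is correct, and it rests on the same two pillars as the paper's: the Jessen--Wintner purity theorem and L\'evy's discreteness criterion, justified by the absolute convergence $\sum_j\sum_k 1/q_k^{(j)}\leq\sum_j 2/q_1^{(j)}<\infty$ coming from property 6 and condition (\ref{cond.infinite convolutions}). The difference is in the decomposition and in the level at which L\'evy is applied. The paper regroups the double series along the diagonals $k+j=d$, obtaining a series of independent \emph{discrete} summands to which Jessen--Wintner applies, and then invokes L\'evy with the summands $\xi^{(j)}$ themselves, using that the maximal atom of $\xi^{(j)}$ equals $D_j=\prod_k\max\{p^{(j)}_{0k},p^{(j)}_{1k}\}$ (which implicitly relies on uniqueness of the digit representation, property 8, and tacitly handles the case where some $\xi^{(j)}$ is continuous, since then $D_j=0$). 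You instead flatten the double array by a bijection into a single series $\sum_n b_n\eta_n$ of independent two-valued summands (legitimate, since the deterministic bound $\sum_{j,k}1/q_k^{(j)}<\infty$ makes the rearrangement pointwise harmless) and apply L\'evy directly at the digit level; the price is the extra bookkeeping step identifying the unordered product over $\N\times\N$ with the iterated product in the statement, which you settle correctly by noting each factor lies in $[\tfrac12,1]$, comparing $-\ln a\asymp 1-a$ there, and using Tonelli for the nonnegative double series. Your route avoids computing maximal atoms of the possibly continuous components $\xi^{(j)}$ and applies L\'evy only to genuinely discrete summands, at the cost of the product-rearrangement argument; the paper's route is shorter but leans on the two-level use of L\'evy. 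Both, like the paper, implicitly assume independence of the whole double family $\{\xi^{(j)}_k\}_{j,k}$, not merely of the sums $\xi^{(j)}$, which is the natural reading of the convolution setting.
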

\begin{proof}
From property 6 of denominators of the second Ostrogradsky series and condition \ref{cond.infinite convolutions} follows that  $\sum\limits_{j=1}^{\infty} \sum\limits_{k=1}^{\infty} \frac{1}{q_k^{(j)}}  \leq \sum\limits_{j=1}^{\infty} \frac{2}{q_1^{(j)}} < + \infty.$ So,  the random variable $\widetilde{\psi}_{\infty}$ can be represented in the following form:
\begin{equation}\label{reoreder infinite convolutions}
\widetilde{\psi}_{\infty} = \sum\limits_{d=1}^{\infty} \left(\sum\limits_{k+j=d}^{\infty} \frac{(-1)^{k+1} \xi_k^{(j)}}{q_k^{(j)}}\right),
\end{equation}
i.e., $\widetilde{\psi}_{\infty}$ can be represented as a sum of convergent series of independent discretely distributed random variables, and, therefore, from the Jessen-Wintner theorem it follows that $\widetilde{\psi}_{\infty}$ has the distribution of pure type.

The atom of maximal weight of the distribution of $\xi^{(j)}$ is equal to  $D_j :=\prod\limits_{k=1}^{\infty} \max\limits_{k} \{p^{(j)}_{0k}, p^{(j)}_{1k}\}$. So, from L\'evi theorem (\cite{Lev31}) it follows that $\widetilde{\psi}_{\infty}$ is purely discretely distributed if and only if
the product $\prod\limits_{j=1}^{\infty} D_j$ converges, which prove the theorem.
\end{proof}

\textbf{Remark.} If for some $j \in N $ the random variable $\xi^{(j)}$ has a continuous distribution, then $\widetilde{\psi}_{\infty}$ also has no atoms.
But it can happens that $\xi^{(j)}$ is pure atomic for all $j \in N$, but  $\widetilde{\psi}_{\infty}$ does not.

As has been shown before, the sum $\sum\limits_{j=1}^m \xi^{(j)}$ is always singular and the corresponding spectrum is of zero Hausdorff dimension.
In the limiting case the situation is more complicated. The random variable $\widetilde{\psi}_{\infty}$ can be absolutely continuous as well as singularly continuous with different values of the Hausdorff dimension of the spectrum.
Let us consider two "critical" cases.

\textbf{Example 1.} Let $q_1^{(j)}= \frac{1}{2^j}$, $p_{01}^{(j)}=\frac{1}{2}$.
In such a case   $$\widetilde{\psi}_{\infty} = \sum\limits_{j=1}^{\infty} \frac{\xi_1^{(j)}}{2^j} + \sum\limits_{j=1}^{\infty} \left(\sum\limits_{k=2}^{\infty} \frac{(-1)^{k+1} \xi_k^{(j)}}{q_k^{(j)}}\right).$$
Since the random variable  $\sum\limits_{j=1}^{\infty} \frac{\xi_1^{(j)}}{2^j}$ has uniform distribution on the unit interval, we get absolute continuity of the distribution of $\widetilde{\psi}_{\infty}$. In this case $\dim_H (S_{\widetilde{\psi}_{\infty}})=1.$

\textbf{Example 2.} Let $\{q_k\}$ be an arbitrary Ostrogradsky sequence of positive integers. Let $q_k^{(j)} = \frac{1}{q_{k+j}}$.
In such a case the random variable $\widetilde{\psi}_{\infty}$ has a singular distribution with anomalously fractal spectrum for any choice of probabilities $p^{(j)}_{ik}$.


\section{Fine fractal properties of the distribution of the random variable $\xi$}\label{sec:fractal.properties}

The main aim of this section is the study fine fractal properties of the distribution of the random variable $\xi$  generated by a given Ostrogradsky sequence (see (\ref{eq:rand.var.def}) to remind the exact definition). Let $\mu = \mu_{\xi}$ be the probability measure corresponding to $\xi$. The spectrum $S_{\mu}$ has zero Hausdorff dimension and, therefore, the Hausdorff dimension of the measure $dim_H(\mu) := \inf\limits \{ \dim_H(E), E
\in \mathcal{B}, \mu(E)=1 \}$  is also equal to zero. So, in such a case the classical Hausdorff dimension does not
reflect the difference between the spectrum and other essential supports (see, e.g., \cite{TorbinUMJ2005}) of the singular measure $\mu$.

So, study fine fractal properties of the distribution of $\xi$ it is necessary to apply more delicate tools than $\alpha-$dimensional Hausdorff measure and the corresponding Hausdorff dimension. To this end let us consider  the so called $h-$Hausdorff measures.
Let $h(t): R_+ \to R_+$ be a continuous increasing (non-decreasing) function such that $\lim\limits_{t\to 0} h(t) = 0.$  Usually the function $h$ is called the dimensional function or gauge function.
For a given set $E$, a given gauge function $h$ and a given $\varepsilon >0$, let
$$ H^{h}_{\varepsilon} (E) := \inf\limits_{|E_j| \leq \varepsilon} \sum\limits h(|E_j|), \bigcup_j E_j \supseteq E, $$
 where the infimum is taken over all $\varepsilon$-coverings $\{E_j\}$ of the set $E$.

 Since $ H^{h}_{\varepsilon_1} (E) \geq H^{h}_{\varepsilon_2} (E)$ for $\varepsilon_1 < \varepsilon_2$,  the following limit
  $$ H^{h}(E) := \lim\limits_{ \varepsilon \to 0} H^{h}_{\varepsilon} (E)$$ exists, and is said to be the h-Hausdorff measure (or $H^h-$measure) of the set $E$. For a given  set $E$ and a given gauge function $h$ the value $H^{h}(E)$ can be either zero or  positive and finite, or to be equal $+\infty$.
 If $h(t) = t^{\alpha}$, then we get the classical Hausdorff measure. If
$h_1$ and $h_2$ are dimension functions such that $\lim\limits_{t \to 0} \frac{h_1(t)}{h_2(t)} = 0$, then  $H^{h_1}(E ) = 0$ whenever $H^{h_2}(E ) < +\infty$ (see, e.g., \cite{Fal03, TuP} for details). Thus partitioning the dimension functions into those for which $H^h$ is finite and those
for which it is infinite gives a more precise information about fine fractal properties of a set $E$. An important example of this is Brownian motion in $R^3$ (see Chapter 16 of \cite{Fal03} for details). It can be shown that almost surely a Brownian path is of the Hausdorff dimension 2,  but their $H^2$-measure is equal to 0. More refined calculations show that such a path has positive and finite $H^h$-measure, where
$h(t) = t^2 \log (\log(\frac{1}{t}))$.

    So, our first aim of this section is to find a gauge function $h$ for the spectrum of the random variable $\xi$ and study fractal properties of the probability measure $\mu_{\xi}$ w.r.t. the measure $H^{h}$.  To this aim let us remind the notion of  the Hausdorff--Billingsley dimension.

Let $M$ be a fixed bounded subset of the real line. A family
$\Phi_M$ of intervals is said to be a \emph{fine covering family}
for $M$ if for any subset $E\subset M$, and for any
$\varepsilon>0$ there exists an at most countable
$\varepsilon$-covering $\left\{E_j\right\}$ of $E$, $E_j \in
\Phi_M$.
A fine covering
family $\Phi_M$ is said to be \emph{fractal} if for the
determination of the Hausdorff dimension of any
subset $E \subset M$ it is enough to consider only coverings from
$\Phi_M$.

For a given bounded subset $M$ of the real line, let $\Phi_M$ be a
fine covering family for $M$, let $\alpha$ be a positive number
and let $\nu$ be a continuous probability measure. The
$\nu$-$\alpha$-Hausdorff--Billingsley measure of a subset $E
\subset M$ w.r.t. $\Phi_M$ is defined as follows:
\[
H^\alpha(E,\nu, \Phi_M) = \lim_{\varepsilon \to 0} \biggl\{
\inf_{\nu(E_j) \leq \varepsilon} \sum_j (\nu (E_j
))^\alpha\biggr\},
\]
where $E_j \in \Phi_M$ and $\bigcup\limits_j E_j \supset E$.

\begin{definition*}
The number $\dim_H (E, \nu, \Phi_M)= \inf \{ \alpha:
H^\alpha(E,\nu,\Phi_M)= 0\}$ is called the
\emph{Hausdorff--Billingsley dimension of the set $E$ with respect
to the measure $\nu$ and the family of coverings $\Phi_M$}.
\end{definition*}

\begin{remark*}
1) Let $\Phi_M$ be the family of all closed subintervals of the
minimal closed interval $[a,b]$ containing $M$. Then for any $E
\subset M$ the number $\dim_H(E, \nu,\Phi_M)$ coincides with the
classical Hausdorff--Billingsley dimension $\dim_H(E, \nu)$ of the
subset $E$ w.r.t. the measure $\nu$.

2) Let $M=[0,1]$, $\nu$ be the Lebesgue measure on $[0,1]$ and let
$\Phi_M$ be a fractal family of coverings. Then for any $E \subset
M$ the number $\dim_H (E, \nu, \Phi_M)$ coincides with the
classical Hausdorff dimension $\dim_H(E)$ of the
subset $E$.
\end{remark*}

Let $\Phi_M^\nu$ be the image of a fine covering family under the
distribution function of a probability measure $\nu$, i.e.,
$\Phi_M^\nu = \{ E': E' = F_\nu (E), E \in \Phi_M \}$. The following lemma has been proven in \cite{ABPT2}.
\begin{lemma}\label{lem:fractal.cf}
A fine covering family $\Phi_M$ can be used for the equivalent
definition of the Hausdorff--Billingsley dimension of any subset
$E\subset M$ w.r.t. a measure $\nu$ if and only if the covering
family $\Phi_M^\nu$ can be used for the equivalent definition of
the classical Hausdorff dimension of any subset
$E'=F_\nu(E)$, $E\subset M$, i.e.,
\[
\dim_H (E, \nu, \Phi_M)=\dim_H (E, \nu) \quad \text{for any $E\subset
M$}
\]
if and only if the covering family $\Phi_M^\nu$ is fractal.
\end{lemma}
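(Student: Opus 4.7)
The plan is to reduce the Hausdorff--Billingsley computation on $M$ with respect to $\nu$ to a classical Hausdorff computation on the image $F_\nu(M)\subset[0,1]$ by pushing covering families forward through the distribution function. The bridge is the identity $\nu(I) = |F_\nu(I)|$ (Lebesgue length of the image), valid for any closed interval $I$ because $\nu$ is continuous; this turns $\nu$-sizes of covering sets into ordinary diameters of their images.

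First I would establish a transfer identity: for any $\alpha > 0$, any $\varepsilon > 0$, any fine covering family $\Phi_M$, and any $E \subset M$,
\begin{equation*}
H^\alpha_\varepsilon(E, \nu, \Phi_M) = H^\alpha_\varepsilon(F_\nu(E), \Phi_M^\nu),
\end{equation*}
where the right-hand side denotes the ordinary $\alpha$-Hausdorff pre-measure of $F_\nu(E) \subset [0,1]$ computed using covers drawn from $\Phi_M^\nu$. The key is that the pushforward $I \mapsto F_\nu(I)$ is a size-preserving correspondence between $\Phi_M$ and $\Phi_M^\nu$: a $\Phi_M$-cover of $E$ with $\nu$-sizes at most $\varepsilon$ goes, under $F_\nu$, to a $\Phi_M^\nu$-cover of $F_\nu(E)$ with ordinary diameters at most $\varepsilon$, and $\sum_j (\nu(I_j))^\alpha = \sum_j |F_\nu(I_j)|^\alpha$. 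The reverse direction uses the injectivity of $F_\nu$ on the spectrum $M$ (forced by $M$ being the topological support of the continuous measure $\nu$) to pull any $\Phi_M^\nu$-cover of $F_\nu(E)$ back to a $\Phi_M$-cover of $E$; any degenerate zero-length cover elements are discarded, since they contribute nothing to the $\alpha$-sum. Letting $\varepsilon \to 0$ yields
\begin{equation*}
\dim_H(E, \nu, \Phi_M) = \dim_H(F_\nu(E), \Phi_M^\nu).
\end{equation*}

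Next I would specialize to the canonical family. Taking $\Phi_M$ to be all closed subintervals of the minimal $[a,b] \supset M$, the image family $\Phi_M^\nu$ is the collection of all closed subintervals of $[0,1]$, which is fractal for the ordinary Hausdorff dimension. By the first remark after the Hausdorff--Billingsley definition, the transfer identity then reads
\begin{equation*}
\dim_H(E, \nu) = \dim_H(F_\nu(E)) \quad \text{for every } E \subset M.
\end{equation*}
Combining this with the general identity gives, for any fine covering family $\Phi_M$ and every $E \subset M$,
\begin{equation*}
\dim_H(E, \nu, \Phi_M) = \dim_H(E, \nu) \ \Longleftrightarrow\ \dim_H(F_\nu(E), \Phi_M^\nu) = \dim_H(F_\nu(E)),
\end{equation*}
and requiring the right-hand equation to hold for all $E \subset M$ is, by the definition of fractality, exactly the statement that $\Phi_M^\nu$ is a fractal covering family.

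The main technical obstacle is the backward direction of the transfer identity. Since $F_\nu$ may be constant on gaps of $M$, the preimage of a cover element of $\Phi_M^\nu$ need not itself lie in $\Phi_M$; I expect to handle this by exploiting that $F_\nu$ is strictly increasing on $M$ itself (forced by $M$ being the topological support of a continuous $\nu$), so the cover correspondence respects sizes up to a $\nu$-null set of gap points that does not affect the infima. Once this bookkeeping is in place, the chain of equivalences above is immediate.
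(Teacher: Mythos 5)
A point of reference first: the paper does not prove this lemma at all --- it is quoted with the remark that it has been proven in \cite{ABPT2} --- so there is no in-paper argument to compare against. Your route (push $\Phi_M$-covers forward through $F_\nu$ using $\nu(I)=\abs{F_\nu(I)}$ for intervals and the continuity of $\nu$, obtain $\dim_H(E,\nu,\Phi_M)=\dim_H(F_\nu(E),\Phi_M^\nu)$, specialize to the family of all closed subintervals to get $\dim_H(E,\nu)=\dim_H(F_\nu(E))$, then read off the equivalence with fractality of $\Phi_M^\nu$) is the natural transfer argument and is essentially certainly the one behind the cited proof. The forward half of the transfer identity is exact, and the concluding chain of equivalences is correct once the backward half is secured (you should also note, in passing, that $\Phi_M^\nu$ is indeed a fine covering family for $F_\nu(M)$, which follows from the same uniform-smallness observation used below).

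The weak point is exactly where you locate it, but your proposed fix is wrong as stated: $F_\nu$ is \emph{not} strictly increasing on $M$ when $M=S_\nu$ is a Cantor-type support, because the two endpoints of every complementary gap both belong to $S_\nu$ and are mapped to the same value (the gap has zero $\nu$-measure and $\nu$ is continuous); this is precisely the situation of the paper, where the common spectrum is nowhere dense. Hence a pulled-back family $\set{I_j}$ with $F_\nu(I_j)=J_j$ may genuinely miss points of $E$, namely those $x\in E$ sharing their $F_\nu$-value with a point of some $I_j$ without lying in $I_j$. Moreover, the appeal to ``a $\nu$-null set does not affect the infima'' is not a valid principle here: $\nu$-null sets can carry positive Hausdorff--Billingsley dimension. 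The correct repair is countability, not nullity: every exceptional $x$ as above is an endpoint of a maximal interval of constancy of $F_\nu$, and there are at most countably many such intervals, so the uncovered part of $E$ is countable; since $\Phi_M$ is a fine covering family and $\nu$ is continuous (intervals of small diameter have uniformly small $\nu$-measure), each exceptional point can be enclosed in an element of $\Phi_M$ of arbitrarily small $\nu$-measure, so for every $\alpha>0$ this countable set is covered at arbitrarily small $\alpha$-cost and the backward inequality $H^\alpha(E,\nu,\Phi_M)\le H^\alpha(F_\nu(E),\Phi_M^\nu)$ (hence equality of the dimensions) survives. With that substitution your argument is complete.
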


\begin{definition*}
The number
\[
\dim_H(\mu, \nu)= \inf\limits_{E \in B_{\eta}} \{ \dim_H(E, \nu), E \in \mathcal{B}, \mu(E)=1 \}
\]
is said to be the \emph{Hausdorff--Billingsley dimension of the
measure $\mu$ with respect to the measure $\nu$}.
\end{definition*}

To show the difference between the spectrum and
essential supports of the measure $\mu=\mu_{\xi}$ it is natural to  use the Hausdorff--Billingsley
dimension with respect to the measure $\nu^*$, where $\nu^*$ is
the probability measure, which is ``uniformly distributed'' on the spectrum of the measure $\mu_{\xi}$,
 i.e.,
$\nu^*$ is the probability measure
corresponding to the random variable
\begin{equation}\label{eq:rand.var xi^*.def}
\xi^*=\sum_{k=1}^\infty
\frac{(-1)^{k+1}\xi_k^*}{q_k},
\end{equation}
where $\xi_k^*$ are independent random variables taking the values $0$ and $1$ with probabilities
$p^{*}_{0k}$ and $p^{*}_{1k}$ such that for any $i \in \{0,1\}$:

$p^{*}_{ik}=0$ if and only if $p_{ik}=0$;

$p^{*}_{ik}=1$ if and only if $p_{ik}=1$;

$p^{*}_{ik}=\frac{1}{2}$ if and only if  $p_{ik} \in (0,1)$.

\textbf{Remark.} The measure $\nu^*$ can be considered as a  substitute of the Lebesgue measure on the set $S_{\mu}$.
If $p_{ik} \in (0,1), \forall i \in \{0,1\}, \forall k \in N $, then the measure $\nu^*$ is
 uniformly distributed not only on the spectrum $S_{\mu}$, but also  on the set $C_r$ of all incomplete sums of the second Ostrogradsky series $\sum\limits_{k=1}^{\infty} \frac{(-1)^{k+1}}{q_k}$. But, generally speaking, the spectrum  $S_{\mu}$ of the measure $\mu_{\xi}$ can be essentially smaller than the set $C_r$. It is clear that measures $\mu$ and $\nu^{*}$ have a common spectrum.

 \begin{theorem}\label{singularity of mu wrt nu}
 The measure $\mu$ is absolutely continuous w.r.t. the measure $\nu^{*}$  if and only if
 \begin{equation}
 \sum\limits_{k:~ p_{0k} \cdot p_{1k}>0 } (1- 2 p_{0k})^2 < +\infty;
 \end{equation}
 The measure $\mu$ is singularly continuous w.r.t. the measure $\nu^{*}$  if and only if $D=\prod\limits_{k=1}^{\infty} \max\{p_{0k}, p_{1k}\} =0$ and
 \begin{equation}
 \sum\limits_{k:~ p_{0k} \cdot p_{1k}>0 } (1- 2 p_{0k})^2 = +\infty.
 \end{equation}
 \end{theorem}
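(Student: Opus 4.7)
The plan is to reduce the statement to \emph{Kakutani's dichotomy theorem} for infinite product measures by lifting both $\mu$ and $\nu^*$ to the coding space $\{0,1\}^\N$. Let $\tilde\mu = \bigotimes_{k=1}^\infty \mathrm{Bern}(p_{1k})$ and $\tilde\nu^* = \bigotimes_{k=1}^\infty \mathrm{Bern}(p_{1k}^*)$ on $\{0,1\}^\N$, and let $T: \{0,1\}^\N \to [0,1]$ be the projection $T(e)=\sum_{k=1}^\infty \frac{(-1)^{k+1} e_k}{q_k}$. Then $\mu = T_*\tilde\mu$ and $\nu^* = T_*\tilde\nu^*$. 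The first step would be to verify that $T$ is a Borel isomorphism onto its image (modulo null sets): by Lemma~\ref{lem:properties}(\ref{enu:properties.2}) and (\ref{enu:properties.4}) the cylindrical intervals of rank $m+1$ split the rank-$m$ interval into two disjoint closed pieces, and by (\ref{enu:properties.3}) their diameters tend to zero, so $T$ is injective and continuous. Hence $\mu$ is absolutely continuous (resp.\ singular) w.r.t.\ $\nu^*$ iff $\tilde\mu$ is absolutely continuous (resp.\ singular) w.r.t.\ $\tilde\nu^*$.

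Next I would apply Kakutani's dichotomy to the product measures $\tilde\mu$ and $\tilde\nu^*$: they are either mutually equivalent or mutually singular, and equivalence holds iff
\[
\sum_{k=1}^{\infty}\bigl(1-\rho_k\bigr)<\infty,\qquad \rho_k := \sqrt{p_{0k}\,p_{0k}^*}+\sqrt{p_{1k}\,p_{1k}^*}
\]
is the Hellinger affinity of the $k$-th marginals. For indices $k$ with $p_{0k}\in\{0,1\}$ the marginal $\mu_k$ equals $\nu_k^*$ by the definition of $\nu^*$, so $\rho_k=1$ and the corresponding term vanishes. For indices with $p_{0k}\in(0,1)$ we have $p_{0k}^*=p_{1k}^*=\tfrac12$, so $\rho_k = \tfrac{1}{\sqrt2}\bigl(\sqrt{p_{0k}}+\sqrt{1-p_{0k}}\bigr)$. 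Writing $p_{0k}=\tfrac12-\varepsilon_k$ and expanding,
\[
\rho_k = \tfrac{1}{\sqrt{2}}\bigl(\sqrt{1+2\varepsilon_k}+\sqrt{1-2\varepsilon_k}\bigr)\cdot\tfrac{1}{\sqrt{2}} = 1-\tfrac{\varepsilon_k^2}{2}+O(\varepsilon_k^4),
\]
and since $(1-2p_{0k})^2=4\varepsilon_k^2$, one gets $1-\rho_k \asymp (1-2p_{0k})^2$ for such $k$. Consequently the Kakutani series converges iff $\sum_{k:\,p_{0k}p_{1k}>0}(1-2p_{0k})^2<\infty$.

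Finally I would assemble the two assertions. If the latter series converges, Kakutani gives $\tilde\mu\sim\tilde\nu^*$, hence $\mu\sim\nu^*$ and in particular $\mu\ll\nu^*$. If the series diverges, Kakutani gives $\tilde\mu\perp\tilde\nu^*$, hence $\mu\perp\nu^*$; to upgrade this to singular \emph{continuity} of $\mu$ w.r.t.\ $\nu^*$, I invoke Lemma~\ref{cor:xi.continuous}, which identifies continuity of $\mu$ with $D=\prod_k\max\{p_{0k},p_{1k}\}=0$. Conversely, if $D>0$ then $\mu$ has atoms and cannot be singularly continuous, while the converse implication in the first part (absolute continuity forces convergence of the series) follows from the dichotomy itself.

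\textbf{Main obstacle.} The analytic core is straightforward once Kakutani is invoked: the Taylor estimate is elementary and the arithmetic of denominators already appears in Lemma~\ref{lem:properties}. The only delicate point is the verification that $T$ faithfully transports absolute continuity and singularity; this is where the disjointness and shrinking-diameter properties of cylinders from Lemma~\ref{lem:properties} are essential, since without strict separation one would have to worry about ``double-representation'' sequences contributing to the pushforward.
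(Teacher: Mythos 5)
Your proposal follows essentially the same route as the paper: both lift $\mu$ and $\nu^*$ to Bernoulli product measures on $\{0,1\}^\N$ via the coding map given by the cylinder structure, invoke Kakutani's dichotomy, and observe that the Hellinger factors equal $1$ when $p_{0k}p_{1k}=0$ and $\sqrt{p_{0k}/2}+\sqrt{p_{1k}/2}$ otherwise, so that convergence of the Kakutani product is equivalent to $\sum_{k:\,p_{0k}p_{1k}>0}(1-2p_{0k})^2<\infty$. Your Taylor estimate makes explicit what the paper leaves as ``not hard to check,'' and your use of Lemma~\ref{cor:xi.continuous} for the continuity part is consistent with the paper's implicit treatment, so the argument is correct.
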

 \begin{proof}
 Let $\Omega_k = \{0,1\} $, $\mathcal{A}_k =
2^{\Omega_k}$. We define measures $\mu_k$ and $\nu_k$ in the
following way: $$   \mu_k (i) = p_{ik};  \nu_k (i)=
p^{*}_{ik}, ~~i \in \Omega_k.$$ Let $$ (\Omega, \mathcal{A}, \overline{\mu})=
\prod_{k=1}^\infty (\Omega_k , \mathcal{A}_k, \mu_k ),
(\Omega, \mathcal{A}, \nu)= \prod_{k=1}^\infty (\Omega_k,
\mathcal{A}_k, \nu_k)$$ be the infinite products of probability
spaces, and let us consider the measurable mapping $f: \Omega \to
R^1$ defined as follows: $$ \forall \omega = ( \omega_1,
\omega_2, ..., \omega_k, ...) \in \Omega,  f(\omega)= x=
 \sum\limits_{k=1}^{\infty} \frac{(-1)^{k+1} \omega_k}{q_k}.$$

We define the measures $\widetilde{\mu}$ and $\widetilde{\nu}$ as  image measures of
$\overline{\mu}$ resp. $\nu$ under $f$: $$ \widetilde{\mu} (B):= \overline{\mu} (f^{-1} (B)) ;
 \widetilde{\nu}(B) := \nu (f^{-1}(B)),  B \in \mathcal{B}.$$

It is easy to see that $\widetilde{\nu}$ coincides with the  measure
$\nu^{*}$, and $\widetilde{\mu}$ coincides with $\mu$. This mapping is bijective and bi-measurable.
Therefore (see, e.g., \cite{AT}), the measure $\mu$ is absolutely continuous (singular)
with respect  to the  measure $\nu^{*}$  if and only if the measure $\widetilde{\mu}$
is absolutely continuous (singular) with respect to the measure
$\widetilde{\nu}$. By construction, $\mu_k \ll \nu_k$,
$\forall k \in N$. By using Kakutani's theorem \cite{KAK}, we have
\begin{equation}
 \mu_\xi \ll \lambda \,\,\, \Leftrightarrow
\prod_{k=1}^\infty \left( \int_{\Omega_k} \sqrt{\frac{ d \mu_k}{d \nu_k}}
d \nu_k \right) >0 \,\,\, \Leftrightarrow \,\,\, \prod_{k=1}^\infty \left(
\sum_{i \in \Omega_k } \sqrt{p_{ik} q_{ik}}\right)>0,
\label{eqv}\end{equation}
\begin{equation} \label{sing}
  \mu_\xi \perp \lambda \,\,\, \Leftrightarrow \,\,\, \prod_{k=1}^\infty  \left(\int_{\Omega_k} \sqrt{\frac{ d \mu_k}{d \nu_k}}
d \nu_k \right)=0   \Leftrightarrow \,\,\, \prod_{k=1}^\infty
\left( \sum_{i \in \Omega_k } \sqrt{p_{ik} q_{ik}}\right) = 0.
\end{equation}

It is clear that $\sum\limits_{i \in \Omega_k } \sqrt{p_{ik} p^*_{ik}} = 1$ if $p_{0k} \cdot p_{1k} =0$, and $\sum\limits_{i \in \Omega_k }\sqrt{p_{ik} q_{ik}} = \sqrt{\frac12 p_{0k}} + \sqrt{\frac12 p_{1k}}$ if $p_{0k} \cdot p_{1k} >0$.
Since $p_{0k} + p_{1k} =1$, it is not hard to check that the product
 $
 \prod\limits_{k=1}^\infty
\left( \sum\limits_{i \in \Omega_k } \sqrt{p_{ik} q_{ik}}\right) = \prod\limits_{k:~p_{0k} \cdot p_{1k} >0 } \left( \sqrt{\frac12 p_{0k}} + \sqrt{\frac12 p_{1k}}
\right)
 $
converges to a positive constant if and only if the series  $\sum\limits_{k:~ p_{0k} \cdot p_{1k}>0 } (1- 2 p_{0k})^2$ converges, which proves the theorem.
 \end{proof}

\textbf{Remark.} Since the measures $\mu$ and $\nu^*$ have a common spectrum, the absolute continuity of $\mu$ w.r.t. $\nu^*$  means the equivalence of these measures (i.e., $\mu \ll \nu^*$ and $\nu^* \ll \mu $).

If $\max\{p_{0k}, p_{1k}\} =1$ for all large enough $k\in N$, then $\mu$ is discretely distributed with a finite number of atoms.
 So, in the sequel we shall assume that there are infinitely many indices $k$ such that $\max\{p_{0k}, p_{1k}\} <1$, which is equivalent to the continuity of the measure $\nu^{*}$.
\begin{theorem}\label{thm:dimHB.psi}
Let $h_n=-(p_{0n}\ln p_{0n}+p_{1n}\ln p_{1n})$ be the entropy of
the random variable $\xi_n$ and let
$H_n=h_1+h_2+\dots+h_n$. Then the Hausdorff--Billingsley dimension
of the measure $\mu_{\xi}$ with respect to the measure $\nu^*$ is equal to
\[
\dim_H(\mu, \nu^*)= \liminf_{n\to\infty} \frac{H_n}{g_n\ln2},
\]
where $g_n $ is the number of positive elements among $h_1, ..., h_n$.
\end{theorem}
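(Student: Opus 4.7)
The strategy is to work with the cylindrical partitions $\{\Delta_{c_1\ldots c_n}\}$ from Section~\ref{sec:random.incomplete.sum} and invoke the Hausdorff--Billingsley framework. By independence, $\mu(\Delta_{c_1\ldots c_n})=\prod_{k=1}^n p_{c_kk}$; by construction of $\nu^*$, every cylinder meeting the spectrum satisfies $\nu^*(\Delta_{c_1\ldots c_n})=2^{-g_n}$, since only the $g_n$ indices $k\leq n$ with $h_k>0$ contribute a factor of $1/2$, while the others contribute $1$. First I would verify that the family $\Phi_{S_\mu}$ of cylindrical intervals forms a fractal covering family for $S_\mu$ with respect to $\nu^*$: by Lemma~\ref{lem:fractal.cf}, it is enough that the images $F_{\nu^*}(\Delta_{c_1\ldots c_n})$ are subintervals of $[0,1]$ whose diameters $2^{-g_n}$ tend to $0$ and whose rank-$n$ copies partition the image, which follows from the continuity of $\nu^*$ and the standing assumption $g_n\to\infty$.

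For the upper bound $\dim_H(\mu,\nu^*)\leq\alpha_0:=\liminf_n H_n/(g_n\ln 2)$, fix $\alpha>\alpha_0$, pick $\delta>0$, and extract a subsequence $n_s\to\infty$ with $H_{n_s}<(\alpha-\delta)g_{n_s}\ln 2$. Writing $\Delta_n(x)$ for the unique rank-$n$ cylinder containing $x$, set $E_s:=\{x\in S_\mu:\mu(\Delta_{n_s}(x))\geq 2^{-(\alpha-\delta/2)g_{n_s}}\}$. The information variable $I_n(x):=-\ln\mu(\Delta_n(x))=\sum_{k=1}^n(-\ln p_{x_kk})$ is, under $\mu$, a sum of independent terms with total expectation $H_n$. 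A Chebyshev-type estimate (or an exponential Markov bound on $I_{n_s}-H_{n_s}$), combined with a Borel--Cantelli argument along $\{n_s\}$, furnishes a set $E$ of full $\mu$-measure contained in $\liminf_s E_s$. Since $E$ is covered at scale $n_s$ by at most $2^{(\alpha-\delta/2)g_{n_s}}$ cylinders, each of $\nu^*$-mass $2^{-g_{n_s}}$, the $\alpha$-volume of the covering is
\[
\sum \nu^*(D)^{\alpha}\leq 2^{(\alpha-\delta/2)g_{n_s}}\cdot 2^{-\alpha g_{n_s}}=2^{-(\delta/2)g_{n_s}}\to 0,
\]
so $H^\alpha(E,\nu^*,\Phi_{S_\mu})=0$, and hence $\dim_H(\mu,\nu^*)\leq\alpha$.

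For the lower bound I would apply a Frostman-type local-dimension principle adapted to the Hausdorff--Billingsley setting: it suffices to prove that for $\mu$-a.e.\ $x$,
\[
\liminf_{n\to\infty}\frac{\ln\mu(\Delta_n(x))}{\ln\nu^*(\Delta_n(x))}=\liminf_{n\to\infty}\frac{I_n(x)}{g_n\ln 2}\geq\alpha_0,
\]
since this prevents $\mu$ from concentrating on any subset of Hausdorff--Billingsley dimension below $\alpha_0$. The required one-sided concentration of $I_n$ below $H_n$ is then applied along any subsequence realising the convergence $H_n/(g_n\ln 2)\to\alpha_0$, combined with the mass distribution principle on $\mu$.

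The main obstacle I expect is the Shannon--McMillan-type almost-sure control of $I_n(x)$ in this \emph{non-stationary}, \emph{non-uniform} Bernoulli setting: when some $p_{ik}$ are close to $0$, the variances $\mathrm{Var}_\mu(-\ln p_{X_kk})$ need not be bounded, so Kolmogorov's strong law is not directly available. The remedy I would pursue is to restrict second-moment estimates to indices with $h_k>0$, truncate summands with unusually large variance, and invoke Borel--Cantelli only along the discrete subsequence realising the $\liminf$, so that the pointwise dimension bounds hold on a set of full $\mu$-measure without requiring convergence of $I_n(x)/H_n$ for every $n$.
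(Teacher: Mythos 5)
Your reduction to cylinder coverings via Lemma~\ref{lem:fractal.cf}, the identity $\nu^*(\Delta_{c_1\ldots c_n})=2^{-g_n}$, and your upper bound (Chebyshev plus Borel--Cantelli along a sparse subsequence of ranks $n_s$ with $H_{n_s}<(\alpha-\delta)g_{n_s}\ln 2$, then counting the cylinders of large $\mu$-mass) are sound, and amount to a direct covering version of what the paper does. The gap is in the lower bound. The Billingsley/mass-distribution step needs, for $\mu$-a.e.\ $x$, the inequality $\liminf_{n\to\infty} \frac{\ln\mu(\Delta_n(x))}{\ln\nu^*(\Delta_n(x))}\geq\alpha_0$ with the liminf taken over \emph{all} ranks $n$: an admissible covering in $H^{\alpha}(\cdot,\nu^*,\Phi)$ may use cylinders of arbitrary ranks, so you must exclude ranks at which $\mu(\Delta_n(x))$ greatly exceeds $\nu^*(\Delta_n(x))^{\alpha_0-\varepsilon}$. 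Your plan to obtain the one-sided concentration of $I_n(x)=-\ln\mu(\Delta_n(x))$ below $H_n$ ``only along the discrete subsequence realising the $\liminf$'' does not give this: along such a subsequence you control only $I_{n_s}$, while at intermediate ranks (where $H_n/(g_n\ln 2)$ may be large) $I_n(x)$ can still fall below $(\alpha_0-\varepsilon)g_n\ln 2$ on a set of positive $\mu$-measure, and a covering made of exactly those cylinders defeats the estimate. A subsequence of scales suffices to annihilate a Hausdorff measure (upper bound), not to bound one from below.

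Moreover, the obstacle that motivated this truncation/subsequence device is not actually present: the second moments are uniformly bounded, since $\Expect_\mu\bigl(\ln^2 p_{\xi_k k}\bigr)=p_{0k}\ln^2 p_{0k}+p_{1k}\ln^2 p_{1k}\leq 2\max_{0<t\leq 1}t\ln^2 t=8e^{-2}$ for every $k$ (the paper's constant $c_0$), indices with $h_k=0$ contribute zero variance, and the $m$-th index with $h_k>0$ has $g_k=m$, so Kolmogorov's criterion $\sum_{k}\mathrm{Var}(\ln p_{\xi_k k})/g_k^2\leq c_0\sum_m m^{-2}<\infty$ holds and the strong law with normalizer $g_n$ gives $(I_n(x)-H_n)/g_n\to 0$ for $\mu$-a.e.\ $x$, for the \emph{full} sequence of $n$ --- exactly the paper's route. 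With that statement, $\liminf_n I_n(x)/(g_n\ln 2)\geq\liminf_n H_n/(g_n\ln 2)=\alpha_0$ a.e., and your Frostman-type step (Billingsley's theorem giving $\dim_H(E,\nu^*,\Phi)\geq\alpha_0\cdot\dim_H(E,\mu,\Phi)$, together with $\dim_H(E,\mu,\Phi)=1$ for any $E$ of full $\mu$-measure, and Lemma~\ref{lem:fractal.cf} to return to the unrestricted Hausdorff--Billingsley dimension) closes the argument. As written, however, the lower half of your proposal does not go through.
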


\begin{proof}
Let  $r=\sum\limits_{k=1}^\infty
\frac{(-1)^{k+1}}{q_k}$, let $C_r$ be the set of all incomplete sums of this second Ostrogradsky series and  let $\widetilde{\Phi}_{C_r}$ be the
family of the above mentioned cylindrical intervals (see
Section~\ref{sec:random.incomplete.sum} for details), i.e.,
$\widetilde{\Phi}_{C_r} = \{ \Delta_{c_1
c_2 \dots c_k},  k\in \N, c_i \in \{0,1\} \},$ and $C_r =
\bigcap\limits_{k=1}^{\infty} \bigcup\limits_{c_i \in \{0,1\}}
\Delta_{c_1 c_2 \dots c_k}.$

Let $M=S_{\mu}\subset C_r$, and let $\Phi_{M}$ be the following subfamily of $\widetilde{\Phi}_{C_r}$:
$$ \Phi_{M} = \{ \Delta_{c_1
c_2 \dots c_n}, n\in \N ; ~~ c_j \in \{0,1\} ~\mbox{if}~ h_j >0, ~\mbox{and}~ c_j = i_{j} ~\mbox{if}~ h_j=0 ~\mbox{with}~ p_{i_{j}j}=1 \}.$$
It is clear that $ \Phi_{M}$ is a fine covering family for the spectrum $S_{\mu}$.

 Let $\Delta_{c_1 c_2 \dots c_n} \in \Phi_M$. Since $\nu^*(\Delta_{c_1 c_2 \dots
c_n})=2^{-g_n}$, the image $\Phi_M^{\nu^*} = F_{\nu^*}(\Phi_M)$
coincides with the fractal fine covering family consisting of
binary closed subintervals of $[0,1]$. So, from
Lemma~\ref{lem:fractal.cf} it follows that for the determination
of the Hausdorff--Billingsley dimension of an arbitrary subset of
$S_{\mu}$ w.r.t. $\nu^*$ it is enough to consider only
coverings consisting of cylindrical intervals from $ \Phi_{M}$.

Let $\Delta_n(x)= \Delta_{c_1(x) c_2(x)\dots
c_n(x) }$ be the cylindrical interval of the $n$-th rank containing
a point $x$ from the spectrum $S_{\mu}$. Then we have
\[
\mu (\Delta_n(x))= p_{c_1(x)1} \cdot p_{c_2(x)2} \cdot \ldots \cdot
p_{c_n(x)n}, \quad \nu^* (\Delta_n(x))= \frac{1}{2^{g_n}}.
\]

Let us consider the following expression
\[
\frac{\ln \mu
(\Delta_n(x))}{ \ln \nu^* (\Delta_n(x))}=
\frac{\sum\limits_{j=1}^n \ln p_{c_j(x)j}}{-g_{n} \ln2}.
\]

If $x= \Delta_{c_1(x) c_2(x)\dots c_n(x)\dots} \in S_{\mu}$ is chosen
stochastically such that $\Prob\{c_j(x)=i\}= p_{ij}>0$ (i.e., the
distribution of the random variable $x$ corresponds to the measure
$\mu$), then $\{ \eta_j \}= \{ \eta_j(x)\} = \{ \ln p_{c_j(x)j}\}$
is a sequence of independent random variables taking the values
$\ln p_{0j}$ and $\ln p_{1j}$ with probabilities $p_{0j}$ and
$p_{1j}$ respectively.
\[
\Expect(\eta_j) = p_{0j} \ln p_{0j}+p_{1j} \ln p_{1j}= -h_j, \quad
|h_j|\leq\ln2,
\]
\[
\Expect(\eta_j^2) = p_{0j}\ln^2 p_{0j} + p_{1j}\ln^2 p_{1j} \leq
c_0< \infty,
\]
and the constant $c_0$ does not depend on $j$.

If $p_{i_jj}=1$, then $\eta_j$ takes the value $0=E(\eta_j)$ with probability 1. So, in the sum $\eta_1(x)+\eta_2(x)+\dots +\eta_n(x)$ there are at most $g_n$ non-zero addends. It is clear that $g_n \to \infty$ as $n\to \infty$. By using the strong law of large numbers (Kolmogorov's theorem, see, e.g., ~\cite[Chapter III.3.2]{Shi96}), for
$\mu$-almost all points $x \in S_{\mu}$ the following equality holds:
\begin{equation*}
\lim_{n\to\infty} \frac{(\eta_1(x)+\eta_2(x)+\dots +\eta_n(x))-
\Expect(\eta_1(x)+\eta_2(x)+\dots + \eta_n(x))}{g_n}=0.
\end{equation*}
We remark that $\Expect(\eta_1+\eta_2+\dots +\eta_n)= -H_n$.

Let us consider the set
\begin{gather*}
A= \left\{ x: \lim_{n \to \infty} \left( \frac{\eta_1(x)+
\eta_2(x)+\dots + \eta_n(x)}{-g_n \ln2} - \frac{H_n}{g_n \ln2}
\right) =0 \right\}=\\
=\left\{ x: \lim_{n \to \infty} \frac{(\eta_1(x)+\eta_2(x)+\dots
+\eta_n(x))- \Expect(\eta_1(x)+\eta_2(x)+\dots + \eta_n(x))}{-g_n
\ln2} =0 \right\}.
\end{gather*}
Since $\mu(A)=1$, we have $\dim_H(A,\mu)=1$, and, therefore,
$\dim_H(A, \mu, \Phi_M)=1$.

Let us consider the following sets
\begin{align*}
A_1 &= \left\{ x: \liminf_{n \to \infty} \left( \frac{\eta_1(x)+
\eta_2(x)+\dots + \eta_n(x)}{-g_n \ln2} -
\frac{H_n}{g_n \ln2} \right) =0 \right\}; \\
A_2 &= \left\{ x: \liminf_{n \to \infty} \frac{\eta_1(x)+
\eta_2(x)+\dots + \eta_n(x)}{-g_n \ln2} \leq \liminf_{n \to
\infty} \frac{H_n}{g_n \ln2} \right\} =\\
 &= \left\{ x:
\liminf_{n \to \infty} \frac{ \ln \mu (\Delta_n(x))}{\ln \nu^*
(\Delta_n(x))} \leq \liminf_{n \to \infty} \frac{H_n}{g_n \ln2}
\right\};\\
A_3 &= \left\{ x: \liminf_{n \to \infty} \frac{\eta_1(x)+
\eta_2(x)+\dots + \eta_n(x)}{-g_n \ln2} \geq \liminf_{n \to
\infty} \frac{H_n}{g_n \ln2} \right\} =\\
&= \left\{ x: \liminf_{n \to \infty} \frac{ \ln \mu
(\Delta_n(x))}{\ln \nu^* (\Delta_n(x))} \geq \liminf_{n \to
\infty} \frac{H_n}{g_n \ln2} \right\}.
\end{align*}

It is obvious that $A \subset A_1$. We now prove the inclusions
$A_1 \subset A_3$ and $A \subset A_2$. To this end we use the
well-known inequality
\[
\liminf_{n \to \infty} (x_n - y_n)\leq \liminf_{n \to \infty} x_n
- \liminf_{n \to \infty} y_n
\]
holding for arbitrary sequences $\{x_n\}$ and $\{y_n\}$ of real
numbers.

If $x\in A_1$, then
\begin{gather*}
\liminf_{n \to \infty} \frac{\eta_1(x)+ \eta_2(x)+\dots +
\eta_n(x)}{-g_n \ln2} - \liminf_{n \to \infty} \frac{H_n}{g_n
\ln2} \geq \\
\geq \liminf_{n \to \infty} \left( \frac{\eta_1(x)+
\eta_2(x)+\dots + \eta_n(x)}{-g_n \ln2} - \frac{H_n}{g_n \ln2} \right)
=0.
\end{gather*}
So, $x\in A_3$.

If $x\in A$, then
\begin{gather*}
\liminf_{n \to \infty} \frac{H_n}{g_n \ln2} - \liminf_{n \to \infty}
\frac{\eta_1(x)+ \eta_2(x)+\dots + \eta_n(x)}{-g_n \ln2}
\geq \\
\geq \liminf_{n \to \infty} \left( \frac{H_n}{g_n \ln2} -
\frac{\eta_1(x)+ \eta_2(x)+\dots + \eta_n(x)}{-g_n \ln2} \right)= 0.
\end{gather*}
So, $x\in A_2$.

Let $D_0= \liminf\limits_{n \to \infty} \frac{H_n}{g_n \ln2}$. Since
$A\subset A_2 = \left\{ x: \liminf\limits_{n \to \infty} \frac{\ln
\mu (\Delta_n(x))}{\ln \nu^* (\Delta_n(x))} \leq D_0 \right\}$, we
have
\[
\dim_H (A,\nu^*, \Phi_M)\leq D_0.
\]
Since $A \subset A_3= \left\{ x: \liminf\limits_{n \to \infty}
\frac{ \ln \mu (\Delta_n(x))}{\ln \nu^* (\Delta_n(x))} \geq D_0
\right\}$, and $\dim_H(A,\mu,\Phi_M)=1$,  we deduce that
\[
\dim_H (A, \nu^*, \Phi_M)\geq D_0\cdot \dim_H (A,\mu, \Phi_M)= D_0
\] Therefore,
\[
\dim_H(A,\nu^*, \Phi_M)= \dim_H(A,\nu^*)= D_0.
\]

Let us now prove that the above constructed set $A$ is the
``smallest'' support of the measure $\mu$ in the sense of the
Hausdorff--Billingsley dimension w.r.t. $\nu^*$. Let $C$ be an
arbitrary support of the measure $\mu$. Then the
set $C_1= C \cap A$ is also a support of the same measure $\mu$,
and $C_1 \subset C$.

From $C_1 \subset A$, it follows that $\dim_H(C_1,\nu^*)\leq D_0$,
and
\[
C_1 \subset A \subset A_3= \left\{ x: \liminf_{n \to \infty}
\frac{ \ln \mu (\Delta_n(x))}{\ln \nu^* (\Delta_n(x))} \geq D_0
\right\}.
\]
Therefore,
\[
\dim_H(C_1, \nu^*) = \dim_H (C_1,\nu^*, \Phi_M) \geq D_0 \cdot
\dim_H(C_1,\mu, \Phi_M) = D_0 \cdot 1 = D_0.
\]
So, $\dim_H(C,\nu^*) \geq \dim_H (C_1,\nu^{*})= D_0
=\dim_H(A,\nu^*)$.
\end{proof}

\begin{corollary}
   From the construction of the measure $\nu^{*}$ it follows that $\dim_H (S_{\mu}, \nu^{*} )$ is always equal to 1.
\end{corollary}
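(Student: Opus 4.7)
The plan is to derive the corollary from Theorem~\ref{thm:dimHB.psi} by viewing $\nu^{*}$ itself as a measure of the form studied in that theorem, and then combining this with the fact that $\mu$ and $\nu^{*}$ share the same spectrum. Concretely, the strategy splits into a lower bound obtained by applying the formula of Theorem~\ref{thm:dimHB.psi} with $\nu^{*}$ in place of $\mu$, and a matching upper bound via a direct covering of $S_\mu$ by admissible cylindrical intervals.

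For the lower bound I would first observe that, by the very construction of $\nu^{*}$, the entropy $h_k^{*}$ of the coordinate $\xi_k^{*}$ equals $\ln 2$ precisely when $p_{0k}p_{1k}>0$ and vanishes otherwise. Hence the auxiliary sequence counting positive entropies coincides with the same sequence $g_n$ that appears in Theorem~\ref{thm:dimHB.psi} for $\mu$, and $H_n^{*}=g_n\ln 2$. Substituting into that theorem gives
\[
\dim_H(\nu^{*},\nu^{*}) = \liminf_{n\to\infty}\frac{g_n\ln 2}{g_n\ln 2}=1.
\]
Because $\mu$ and $\nu^{*}$ have a common spectrum, $\nu^{*}(S_\mu)=1$, so $S_\mu$ is a support of $\nu^{*}$. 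Since $\dim_H(\nu^{*},\nu^{*})$ is the infimum of $\dim_H(E,\nu^{*})$ over all supports $E$ of $\nu^{*}$, we deduce $\dim_H(S_\mu,\nu^{*})\geq 1$.

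For the matching upper bound I would use the fine fractal covering family $\Phi_M$ introduced in the proof of Theorem~\ref{thm:dimHB.psi}. For each rank $n$, the spectrum $S_\mu$ is partitioned by the $2^{g_n}$ admissible cylindrical intervals of rank $n$, each of $\nu^{*}$-measure $2^{-g_n}$. For any $\alpha>1$ the corresponding $\alpha$-sum equals $2^{g_n}\cdot 2^{-\alpha g_n} = 2^{(1-\alpha)g_n}$, and this tends to $0$ as $n\to\infty$ by our standing assumption that $\nu^{*}$ is continuous (which forces $g_n\to\infty$). Hence $H^{\alpha}(S_\mu,\nu^{*},\Phi_M)=0$ for every $\alpha>1$. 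Since Lemma~\ref{lem:fractal.cf} together with the already noted fractality of $\Phi_M^{\nu^{*}}$ (a binary subdivision of $[0,1]$) guarantees $\dim_H(S_\mu,\nu^{*},\Phi_M)=\dim_H(S_\mu,\nu^{*})$, we conclude $\dim_H(S_\mu,\nu^{*})\leq 1$, and the two bounds combine to give the claim. I do not anticipate any substantive obstacle; the only delicate point is making sure that Theorem~\ref{thm:dimHB.psi} applies even when many coordinates of $\nu^{*}$ are degenerate, but this is handled automatically by the restriction to admissible cylinders in the definition of $\Phi_M$.
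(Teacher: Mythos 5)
Your proposal is correct and is essentially the intended argument: the paper states the corollary without an explicit proof, and the "construction of $\nu^{*}$" it alludes to is exactly your lower-bound step — applying Theorem~\ref{thm:dimHB.psi} with $\nu^{*}$ in place of $\mu$, where every non-degenerate coordinate has entropy $\ln 2$, so $H_n^{*}=g_n\ln 2$ and the liminf equals $1$, combined with the fact that $S_\mu$ is a $\nu^{*}$-full set. Your explicit upper bound via the rank-$n$ admissible cylinders (each of $\nu^{*}$-measure $2^{-g_n}$, with $g_n\to\infty$ under the standing continuity assumption) is the standard fact that the Billingsley dimension w.r.t.\ a continuous probability measure never exceeds $1$; note that here you do not even need Lemma~\ref{lem:fractal.cf}, since restricting to the subfamily $\Phi_M$ can only increase the $\alpha$-premeasure, so $\dim_H(S_\mu,\nu^{*})\le\dim_H(S_\mu,\nu^{*},\Phi_M)\le 1$ directly.
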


\begin{corollary}
   The distribution function $h_1(t):=F_{\nu^*}(t) = \nu^*((-\infty, t)) = \nu^*([0, t))$ is the dimensional (gauge) function for the spectrum $S_\mu$, and the probability measure $\nu^*$ coincides with the restriction of the measure $H^{h_1}$ on the set $S_{\mu}$.
\end{corollary}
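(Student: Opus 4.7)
The plan is to show (a) $h_1 = F_{\nu^*}$ is a valid gauge function, and (b) $H^{h_1}|_{S_\mu}$ coincides with $\nu^*$ as measures. Part (a) is automatic from the continuity of $\nu^*$ (guaranteed by the standing assumption that $\max\{p_{0k}, p_{1k}\} < 1$ for infinitely many $k$), since a distribution function of a continuous probability measure is continuous, nondecreasing, and tends to $0$ at $-\infty$. The substance lies in (b), which I reduce to the key identity
\[
h_1(\ell_n) = 2^{-g_n},
\]
where $\ell_n = \sum_{k>n} 1/q_k$ is the common diameter of every rank-$n$ cylindrical interval in $\Phi_M$ (property 3 of Lemma~\ref{lem:properties}) and $2^{-g_n}$ is the common $\nu^*$-mass of each such cylinder. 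This identity follows by passing to the $\eta$-coordinates of (\ref{eq:shifted Bernoulli convolution}), in which the leftmost rank-$n$ cylinder is precisely $[0, \ell_n]$; property 8 of the Ostrogradsky denominators ($\ell_n < 1/q_n$) forces every other rank-$n$ cylinder to lie strictly to the right of $\ell_n$, so that $F_{\nu^*}(\ell_n)$ captures exactly the $\nu^*$-mass of the single leftmost cylinder.

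For the upper bound $H^{h_1}(E) \leq \nu^*(E)$, given $\delta > 0$ I pick $n$ with $\ell_n < \delta$ and, by outer regularity of $\nu^*$, a union $U$ of $N$ rank-$n$ cylindrical intervals with $E \subseteq U$ and $\nu^*(U) < \nu^*(E) + \varepsilon$. Then the $h_1$-sum of this cover equals $N \cdot 2^{-g_n} = \nu^*(U)$, and letting $\varepsilon \to 0$ gives the bound. For the lower bound $H^{h_1}(E) \geq \nu^*(E)$, I take any $\delta$-cover $\{I_j\}$ of $E$ and assign to each $I_j$ the unique index $n_j$ with $\ell_{n_j} \leq |I_j| < \ell_{n_j - 1}$. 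Property 8 forces $I_j$ to intersect only a bounded number of rank-$n_j$ cylinders, and replacing $I_j$ by the union of those cylinders neither decreases the $h_1$-sum (by monotonicity of $h_1$) nor destroys the covering. Once all covering pieces are cylindrical intervals, the $h_1$-sum equals the $\nu^*$-sum and thus bounds $\nu^*(E)$ from above.

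The principal obstacle is the sharp pointwise inequality $\nu^*(I) \leq h_1(|I|)$ for an arbitrary interval $I$; a crude count of rank-$n_j$ cylinders straddled by $I$ gives only $\nu^*(I) \leq C \cdot h_1(|I|)$ for some absolute constant $C$, which would deliver only equivalence of measures rather than equality. The cleanest resolution is to change variables by $y = F_{\nu^*}(x)$, which pushes $\nu^*$ forward to Lebesgue measure on $[0,1]$, sends $h_1(|I|)$ to the Lebesgue length of $F_{\nu^*}\bigl([0, |I|)\bigr)$, and reduces the inequality to a direct statement about the monotonicity of $F_{\nu^*}$ across its constancy intervals. Once this pointwise bound is secured, the identity $H^{h_1}|_{S_\mu} = \nu^*$ on Borel subsets follows from the agreement of the two measures on the generating family of cylindrical intervals.
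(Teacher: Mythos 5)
Your overall architecture is sound and matches what the corollary needs: the identity $h_1(\ell_n)=2^{-g_n}$ (valid after passing to the nonnegative $\eta$-normalization, so that the spectrum starts at $0$ and, by property 8 of the denominators, every other rank-$n$ cylinder lies to the right of $\ell_n$), the resulting upper bound $H^{h_1}(E)\le\nu^*(E)$ via cylinder covers, and the correct identification that everything hinges on the sharp concentration inequality $\nu^*(I)\le h_1(|I|)$ for arbitrary intervals $I$. (The paper itself gives no argument for this corollary, so that inequality is exactly where the real work lies.) The gap is that your proposed resolution of it does not resolve anything. Changing variables by $y=F_{\nu^*}(x)$ merely restates the claim: it turns $\nu^*(I)\le h_1(|I|)$ into the subadditivity statement $F_{\nu^*}(b)-F_{\nu^*}(a)\le F_{\nu^*}(b-a)$, and ``monotonicity of $F_{\nu^*}$ across its constancy intervals'' is not a reason for subadditivity --- monotone functions are in general not subadditive, and the change of variables uses none of the specific geometry of $S_\mu$. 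Likewise, in your second paragraph the step ``replacing $I_j$ by the union of those cylinders neither decreases the $h_1$-sum (by monotonicity of $h_1$)'' is false as stated: if $I_j$ meets $m_j\ge 2$ rank-$n_j$ cylinders, the replacement contributes $m_j 2^{-g_{n_j}}$, while monotonicity only gives $h_1(|I_j|)\ge 2^{-g_{n_j}}$; this is precisely the multiplicative constant you concede later.

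What is actually needed is a structural argument showing that the concentration function of $\nu^*$ is attained at the left end of the spectrum: all positive-mass rank-$n$ cylinders have the same mass $2^{-g_n}$ and the same length $\ell_n$; the smallest gap between adjacent rank-$n$ cylinders is $1/q_n-\ell_n$ and is already realized between the two leftmost ones; and the restriction of $\nu^*$ to any cylinder is invariant under the digit flip $c_k\mapsto 1-c_k$, so a right-end piece of a cylinder carries the same mass as a left-end piece of equal length, namely $h_1$ of that length. An induction on the rank $n$ with $\ell_n\le t<\ell_{n-1}$ (an interval of length $t$ meets at most two adjacent cylinders of the previous rank, and the two end-pieces it captures have total mass at most $h_1(a)+h_1(b)$ with $a+b\le t-(1/q_n-\ell_n)$, which one then bounds by $h_1(t)$ using the recursion for $F_{\nu^*}$) carries this through; without an argument of this kind your method yields only $\nu^*\le C\,H^{h_1}\le C\,\nu^*$ on $S_\mu$. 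That comparability does justify the first half of the corollary ($h_1$ is a correct gauge function, $0<H^{h_1}(S_\mu)<\infty$), but not the asserted exact coincidence $H^{h_1}|_{S_\mu}=\nu^*$, which is the part your proof was supposed to deliver. A smaller caveat: the identity $h_1(\ell_n)=2^{-g_n}$, and hence the whole scheme, also requires that no digit be forced to equal $1$ (otherwise the spectrum does not start at $0$ and $F_{\nu^*}$ vanishes on small scales); the paper is equally silent about this normalization, but you should state it explicitly.
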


\begin{corollary}
   If the measure $\mu $ is absolutely continuous w.r.t. the measure $\nu^*$, then $\dim_H (\mu, \nu^{*} ) = 1$.
   \end{corollary}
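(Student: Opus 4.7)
The plan is to combine Theorem \ref{thm:dimHB.psi} with the absolute continuity criterion in Theorem \ref{singularity of mu wrt nu}, and to translate the $\ell^2$-type summability of $(1-2p_{0k})^2$ into $\ell^1$-summability of the entropy defects $\ln 2 - h_k$.

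First, I would apply Theorem \ref{thm:dimHB.psi} to write
\[
\dim_H(\mu,\nu^*) = \liminf_{n\to\infty} \frac{H_n}{g_n\ln 2},
\]
and observe that $h_k \le \ln 2$ with equality iff $p_{0k}=1/2$, while $g_n$ counts exactly the indices $k\le n$ for which $h_k>0$. Consequently $H_n \le g_n\ln 2$, so the liminf is at most $1$ unconditionally, and it suffices to prove the reverse inequality under the absolute continuity assumption.

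Next, for every $k$ with $p_{0k}p_{1k}>0$, set $\varepsilon_k:=1-2p_{0k}\in(-1,1)$, so that $p_{0k}=(1-\varepsilon_k)/2$ and $p_{1k}=(1+\varepsilon_k)/2$. A Taylor expansion around $\varepsilon_k=0$ gives
\[
\ln 2 - h_k \;=\; \tfrac12 p_{0k}\ln(2p_{0k}) + \tfrac{1}{2}\cdot 2p_{1k}\ln(2p_{1k}) \;=\; \tfrac{\varepsilon_k^{2}}{2} + O(\varepsilon_k^{3}),
\]
so $\ln 2 - h_k$ is comparable to $\varepsilon_k^{2}$ uniformly in $k$ (since $|\varepsilon_k|<1$ and only the range where $h_k>0$ matters). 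By Theorem \ref{singularity of mu wrt nu}, absolute continuity of $\mu$ w.r.t.\ $\nu^*$ is equivalent to $\sum_{k:\,p_{0k}p_{1k}>0}\varepsilon_k^{2}<\infty$, which therefore implies
\[
S := \sum_{k:\,h_k>0}(\ln 2 - h_k) < +\infty.
\]

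Finally, I would rewrite
\[
\frac{H_n}{g_n\ln 2} \;=\; 1 - \frac{1}{g_n\ln 2}\sum_{\substack{k\le n\\ h_k>0}}(\ln 2 - h_k) \;\ge\; 1 - \frac{S}{g_n\ln 2}.
\]
By the standing assumption (stated just before Theorem \ref{thm:dimHB.psi}) that $\nu^*$ is continuous, there are infinitely many indices $k$ with $p_{0k}\in(0,1)$, hence $g_n\to\infty$ as $n\to\infty$. Passing to the liminf yields $\dim_H(\mu,\nu^*)\ge 1$, and combined with the trivial upper bound this gives $\dim_H(\mu,\nu^*)=1$.

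The only non-trivial point is the quantitative link between the two Theorems: the absolute continuity criterion is phrased in terms of $\sum\varepsilon_k^{2}$, whereas the dimension formula involves the average entropy, and one must be careful that the Taylor remainder does not contribute. This is handled by the uniform estimate $\ln 2 - h_k \le c\,\varepsilon_k^{2}$ valid on $\varepsilon_k\in(-1,1)$, which is the main (but short) analytic step of the argument.
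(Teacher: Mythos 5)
Your argument is correct. Since the paper states this corollary without a written proof, the comparison is with the route it implicitly suggests: because $\mu$ and $\nu^*$ share the same spectrum, absolute continuity of $\mu$ w.r.t.\ $\nu^*$ is in fact equivalence (as remarked after Theorem \ref{singularity of mu wrt nu}), so every set $E$ with $\mu(E)=1$ also has $\nu^*(E)=1$ and hence $\dim_H(E,\nu^*)=1$ (the same fact ``full measure implies dimension one w.r.t.\ that measure'' that is used inside the proof of Theorem \ref{thm:dimHB.psi}); taking the infimum gives $\dim_H(\mu,\nu^*)=1$ with no computation. You instead go quantitatively through Kakutani's criterion and the entropy formula: $\sum_{k:\,p_{0k}p_{1k}>0}(1-2p_{0k})^2<\infty$ forces $\sum_{k:\,h_k>0}(\ln 2-h_k)<\infty$, and since $g_n\to\infty$ this makes $\liminf H_n/(g_n\ln 2)=1$. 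This is a genuinely different and also valid derivation; it buys more, namely that under absolute continuity the entropy defects are summable, not merely Cesàro-negligible, while the measure-equivalence argument is shorter and avoids any analytic estimate. Two small points to tighten in your write-up: the displayed identity should read $\ln 2-h_k=p_{0k}\ln(2p_{0k})+p_{1k}\ln(2p_{1k})$ (your $\tfrac12$ factors are a slip), and the uniform bound you invoke does hold but deserves a line, e.g.\ $\ln 2-h_k\le(\ln 2)(1-2p_{0k})^2$, which is the classical inequality $h(p)\ge 4(\ln 2)\,p(1-p)$; alternatively you can bypass uniformity altogether by noting that $\sum\varepsilon_k^2<\infty$ forces $\varepsilon_k\to 0$, so the local Taylor estimate suffices for all but finitely many terms, which do not affect convergence of the sum nor the liminf.
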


\textbf{Remark.} From the latter theorem it follows that the Hausdorff-Billingsley dimension of the measure $\mu$ w.r.t. the measure $\nu^{*}$ can take any value from the set $[0,1]$.

\textbf{Examples.}

If $p_{0k}=\frac{1}{2k}, \forall k \in N$, then $\dim_H (\mu, \nu^{*} ) = 0;$

if $p_{0k}=\frac{1}{2} - \frac{1}{4\sqrt{k}}, \forall k \in N$, then $\dim_H (\mu, \nu^{*} ) = 1;$ but $\mu \bot \nu^*$.

if $p_{0,2k}=\frac{1}{2} - \frac{1}{4k}, p_{0,2k-1}=\frac{1}{2k},\forall k \in N$, then $\dim_H (\mu, \nu^{*} ) = \frac{1}{2}.$

\textbf{Remark.} To clarify how uniformly the measure $\mu$ is distributed on its spectrum, we shall  firstly check whether $\mu \ll \nu^*$. To  clarify how irregularly the measure $\mu$ is distributed on its spectrum, we should calculate the value of $\dim_H (\mu, \nu^{*} )$. The measure with smaller dimension can be considered as more irregularly distributed on its spectrum.

\bigskip
To stress the difference between the spectrum $S_{\mu}$ and the set $C_r$ of all incomplete sums of the second Ostrogradsky series, it is natural to use the Hausdorff-Billingsley dimension w.r.t. the measure $\nu_r$ which is uniformly distributed on the whole set $C_r$, i.e., $\nu_r$ is the probability distribution of the random variable
\[
\xi_r=\sum_{k=1}^\infty
\frac{(-1)^{k+1}\varepsilon_k}{q_k},
\]
where $\varepsilon_k$ are independent random variables taking the values $0$ and $1$ with probabilities
$\frac{1}{2}$ and $\frac{1}{2}$ correspondingly.

\begin{theorem}\label{theorem: dim wrt nu_r}
The Hausdorff--Billingsley dimension  of the measure $\mu_{\xi}$ w.r.t. $\nu_r$   is equal to
\begin{equation}\label{eq: dim mu wrt nu_r}
\dim_H(\mu, \nu_r)= \liminf_{n\to\infty} \frac{H_n}{n\ln2}.
\end{equation}
The Hausdorff--Billingsley dimension  of the spectrum  $S_\mu$ w.r.t. $\nu_r$  is equal to
\begin{equation}\label{eq: dim S_mu wrt nu_r}
\dim_H(S_\mu, \nu_r)= \liminf_{k\to\infty} \frac{N_k}{k},
\end{equation}
where $N_k= \# \{ j: j\leq k, p_{0j}p_{1j}>0\}$.
\end{theorem}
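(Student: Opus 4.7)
The plan is to follow the blueprint of Theorem~\ref{thm:dimHB.psi}, replacing the reference measure $\nu^{*}$ by $\nu_r$ and exploiting the identity $\nu_r(\Delta_{c_1 c_2 \ldots c_n}) = 2^{-n}$, which now holds for \emph{every} cylindrical interval of rank $n$ (not only for those in $\Phi_M$). Taking $M = S_\mu$ and the fine covering family
\[
\Phi_M = \{\Delta_{c_1 \ldots c_n} : n \in \N,\ c_j \in \{0,1\} \text{ if } p_{0j}p_{1j}>0,\ c_j = i_j \text{ if } p_{i_j j}=1\},
\]
the image $F_{\nu_r}(\Phi_M)$ consists of admissible dyadic subintervals of $[0,1]$, so by Lemma~\ref{lem:fractal.cf} both HB dimensions can be computed as infima over cylindrical covers from $\Phi_M$ alone.

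For formula~(\ref{eq: dim mu wrt nu_r}) I would introduce the independent random variables $\eta_j(x) = \ln p_{c_j(x) j}$ on the probability space $(S_\mu, \mu)$, and note that $\Expect(\eta_j) = -h_j$ and $\Expect(\eta_j^2) \leq c_0$ uniformly in $j$, since the function $p \ln^2 p + (1-p)\ln^2(1-p)$ is bounded on $[0,1]$. Kolmogorov's strong law of large numbers then yields, for $\mu$-a.a.\ $x$,
\[
\frac{\ln \mu(\Delta_n(x))}{\ln \nu_r(\Delta_n(x))} \;=\; \frac{\sum_{j=1}^n \eta_j(x)}{-n \ln 2} \;=\; \frac{H_n}{n \ln 2} + o(1) \qquad (n \to \infty).
\]
The remainder of the argument from Theorem~\ref{thm:dimHB.psi}---constructing a full-$\mu$-measure set $A$ on which this local dimension identity holds, sandwiching $\dim_H(A, \nu_r)$ between $D_0 := \liminf_n H_n/(n \ln 2)$ and itself, and showing that $D_0$ is also a lower bound for the HB dimension of every $\mu$-support---transfers verbatim with $g_n$ replaced by $n$, delivering $\dim_H(\mu, \nu_r) = D_0$.

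For formula~(\ref{eq: dim S_mu wrt nu_r}), the upper bound is obtained by covering $S_\mu$ at each rank $n$ by exactly $2^{N_n}$ cylinders of $\Phi_M$ of $\nu_r$-measure $2^{-n}$: the resulting $\alpha$-volume $2^{N_n - \alpha n}$ tends to $0$ along a subsequence whenever $\alpha > \liminf_n N_n/n$. For the matching lower bound I would apply the mass distribution principle using the measure $\nu^{*}$ from Theorem~\ref{thm:dimHB.psi}. Given $\alpha < \liminf_n N_n/n$, fix $n_0$ with $N_n > \alpha n$ for all $n \geq n_0$. An arbitrary cover of $S_\mu$ by cylinders from $\Phi_M$ of ranks $n_i \geq n_0$ can be thinned, using the tree structure of cylinders, to a disjoint subcover $\{\Delta^{(i)}\}$ without increasing the $\alpha$-volume, and then
\[
\sum_i (\nu_r(\Delta^{(i)}))^\alpha \;=\; \sum_i 2^{-\alpha n_i} \;\geq\; \sum_i 2^{-N_{n_i}} \;=\; \sum_i \nu^{*}(\Delta^{(i)}) \;\geq\; \nu^{*}(S_\mu) = 1,
\]
whence $H^\alpha(S_\mu, \nu_r, \Phi_M) \geq 1 > 0$ and consequently $\dim_H(S_\mu, \nu_r) \geq \liminf_n N_n/n$.

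The most delicate point is the lower bound for $\dim_H(S_\mu, \nu_r)$, since efficient covers of $S_\mu$ may consist of cylinders of widely varying ranks. The key observation that resolves this difficulty is the pointwise identity $(\nu_r(\Delta))^{N_n/n} = \nu^{*}(\Delta)$ on cylinders $\Delta$ of rank $n$ belonging to $\Phi_M$, which converts the required lower bound on the $\alpha$-sum into the trivial bound $\nu^{*}(S_\mu) = 1$. All remaining steps amount to a careful bookkeeping adaptation of the machinery already developed in the proof of Theorem~\ref{thm:dimHB.psi}.
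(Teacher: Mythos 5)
Your proposal is correct, and for the first formula it does exactly what the paper does: the paper simply states that the proof is ``completely similar'' to Theorem~\ref{thm:dimHB.psi}, with $g_n$ replaced by $n$ because $\nu_r(\Delta_{c_1\ldots c_n})=2^{-n}$ for every rank-$n$ cylinder, which is the substitution you carry out. For the second formula, however, you take a genuinely different route. The paper does not cover and count: it notes that $\nu^*$ and $\mu$ have the same spectrum, asserts $\dim_H(S_\mu,\nu_r)=\dim_H(S_{\nu^*},\nu_r)=\dim_H(\nu^*,\nu_r)$ on the grounds that $\nu^*$ is uniformly distributed on $S_\mu$, and then applies the first formula to $\nu^*$, whose coordinate entropies equal $\ln 2$ precisely at the positions with $p_{0j}p_{1j}>0$, so that $H_n(\nu^*)=N_n\ln 2$ and the liminf becomes $\liminf_n N_n/n$. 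You instead prove the spectrum formula directly: the upper bound by covering $S_\mu$ with the $2^{N_n}$ admissible rank-$n$ cylinders, and the lower bound by a mass-distribution estimate with $\nu^*$ as the mass, via $\nu^*(\Delta)=2^{-N_n}=\left(\nu_r(\Delta)\right)^{N_n/n}$ on admissible cylinders. Your version is more self-contained: it supplies, through the Frostman-type inequality, exactly the step the paper leaves unargued (that the dimension of the uniformly distributed measure coincides with that of its spectrum), whereas the paper's reduction is shorter and recycles the SLLN machinery wholesale. Two small points: the image $F_{\nu_r}(\Phi_M)$ consists only of the admissible dyadic intervals, not of all binary intervals, so before invoking Lemma~\ref{lem:fractal.cf} you should add the routine remark that a dyadic interval meeting $F_{\nu_r}(S_\mu)$ is admissible up to shared endpoints (rank-$n$ cylindrical intervals are pairwise disjoint by Lemma~\ref{lem:properties}); and the thinning to a disjoint subfamily in your lower bound is unnecessary, since countable subadditivity already gives $\sum_i \nu^*(\Delta^{(i)})\ge \nu^*(S_\mu)=1$.
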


\begin{proof}
The proof of the first assertion of the theorem is completely similar to the the proof of theorem \ref{thm:dimHB.psi}.
To prove the second statement, let us consider an auxiliary probability measure $\nu^{*}$ which was defined above.

Generally speaking, the measures $\mu$ and $\nu^{*}$ do not coincide (moreover, they can be mutually singular), but their spectra are the same. The measure $\nu^{*}$ is uniformly distributed on
the spectrum of the initial measure. Therefore,
$$
\dim_H(\nu^*, \nu_r ) = \dim_H(S_{\nu^*}, \nu_r ) = \dim_H(S_{\mu}, \nu_r ).
$$
So, to determine the Hausdorff--Billingsley dimension of the
spectra of the initial measure w.r.t. the measure
$\nu_r$ it is enough to apply the first statement of  theorem \ref{theorem: dim wrt nu_r} to the
measure $\nu^*$.
\end{proof}

\begin{corollary}
   The distribution function $h_2(t):= F_{\nu_r}(t)= \nu_r((-\infty, t)) = \nu_r([0, t))$ is the dimensional (gauge) function for the set $C_r$ of all incomplete sums.
   \end{corollary}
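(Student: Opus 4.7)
The plan is to deduce this corollary from Theorem \ref{theorem: dim wrt nu_r} applied in the symmetric situation, combined with a direct geometric estimate for the canonical cylindrical cover. First, I would specialise the framework of Theorem \ref{theorem: dim wrt nu_r} by taking the initial probabilities to be $p_{0k}=p_{1k}=\frac{1}{2}$ for all $k$, so that the random variable $\xi$ coincides with $\xi_r$ and the corresponding measure $\mu$ coincides with $\nu_r$. In this case the per-coordinate entropies satisfy $h_k=\ln 2$, whence $H_n=n\ln 2$ and the theorem gives $\dim_H(\nu_r,\nu_r)=\liminf_{n\to\infty}\frac{H_n}{n\ln 2}=1$. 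Since $S_{\nu_r}=C_r$, this produces $\dim_H(C_r,\nu_r)=1$, confirming that from the Hausdorff--Billingsley viewpoint $\nu_r$ is indeed the ``Lebesgue analogue'' on $C_r$.

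Next, I would observe that the family $\Phi=\{\Delta_{c_1\ldots c_n}\}$ of cylindrical intervals is a fine covering family for $C_r$, and that under $F_{\nu_r}$ its image consists of closed dyadic subintervals of $[0,1]$ (because $\nu_r(\Delta_{c_1\ldots c_n})=2^{-n}$). Hence $\Phi^{\nu_r}$ is fractal for $[0,1]$, and by Lemma \ref{lem:fractal.cf} the $\Phi$-restricted Hausdorff--Billingsley computation agrees with the unrestricted one. For the $h_2$-Hausdorff measure this means that $H^{h_2}(C_r)$ can itself be evaluated using only cylindrical coverings: each rank-$n$ cylinder has length $\ell_n:=\sum_{k>n}\frac{1}{q_k}$ and $h_2(\ell_n)=F_{\nu_r}(\ell_n)$.

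For the upper bound I would use the canonical cover by the $2^n$ rank-$n$ cylinders, giving $H^{h_2}_{\ell_n}(C_r)\le 2^n\,h_2(\ell_n)$. The key step is the two-sided comparison $h_2(\ell_n)\asymp 2^{-n}$, which I would derive from property 8 of the Ostrogradsky denominators ($a_n>r_n$): this separation property guarantees that any interval of length $\ell_n$ meets only a bounded number (in fact $O(1)$) of cylinders of rank $n$, each carrying $\nu_r$-mass $2^{-n}$. Hence $h_2(\ell_n)\le C\cdot 2^{-n}$ and $H^{h_2}(C_r)\le C<\infty$. For the positivity, I would invoke the standard mass distribution principle with the measure $\nu_r$: the same cylinder-counting estimate shows that every interval $I$ of length $t\in[\ell_{n+1},\ell_n)$ satisfies $\nu_r(I)\le C'\cdot 2^{-n}\le C''\,h_2(t)$, so
\[
H^{h_2}(C_r)\ \ge\ \frac{\nu_r(C_r)}{C''}\ =\ \frac{1}{C''}\ >\ 0.
\]
Combining the two bounds yields $0<H^{h_2}(C_r)<\infty$, which is exactly the statement that $h_2$ is a gauge function for $C_r$.

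The main obstacle is the quantitative comparison $h_2(\ell_n)\asymp 2^{-n}$: the alternating signs in the Ostrogradsky expansion and the fact that $F_{\nu_r}(\ell_n)$ measures an interval whose left endpoint is fixed at $-\infty$ (rather than aligned with a cylinder boundary) make the bound non-obvious. The step that secures it is the sharp spacing estimate provided by property 8 applied uniformly along the positive and negative branches of the spectrum, which controls exactly how many rank-$n$ cylinders an interval of diameter $\ell_n$ can hit.
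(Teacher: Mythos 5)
Your argument is sound in substance and, in fact, more than the paper offers: the paper states this corollary without proof, treating it as an immediate consequence of the construction of $\nu_r$ and of Theorem \ref{theorem: dim wrt nu_r}, exactly as the analogous corollary for $h_1=F_{\nu^*}$ and $S_\mu$ is treated. Your route --- the canonical cover of $C_r$ by the $2^n$ cylindrical intervals of rank $n$ for the upper bound, and the mass distribution principle with $\nu_r$ for the lower bound, using the separation $a_n>r_n$ (property 8) to see that an interval of length $\ell_n=\sum_{k>n}1/q_k$ meets only boundedly many rank-$n$ cylinders --- yields the quantitative statement $0<H^{h_2}(C_r)<\infty$ directly, which is what ``gauge function'' means here. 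Note that the preliminary steps are logically superfluous for this goal: the specialization of Theorem \ref{theorem: dim wrt nu_r} to $p_{0k}=\frac12$ only confirms $\dim_H(C_r,\nu_r)=1$, and Lemma \ref{lem:fractal.cf} is not needed, since the upper bound uses one explicit cover and the mass distribution argument already handles arbitrary coverings.

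One point does need repair, and your proposed fix does not work as stated. The obstacle you flag --- that $F_{\nu_r}(\ell_n)=\nu_r((-\infty,\ell_n))$ is anchored at $-\infty$ --- is genuine for the sign-alternating series: $\xi_r$ is negative with positive probability (already on the cylinder $\varepsilon_1=0,\varepsilon_2=1$, since $r_2<1/q_2$), so $h_2(\ell_n)\geq \Prob(\xi_r<0)>0$ for all $n$, the comparison $h_2(\ell_n)\asymp 2^{-n}$ fails, and no spacing estimate ``along the positive and negative branches'' can restore it; with that literal reading $h_2(0+)>0$, so $h_2$ would not even be an admissible gauge function. The correct fix is the normalization the paper builds into the statement itself, $F_{\nu_r}(t)=\nu_r([0,t))$: via the shift $\xi=\eta-\mathrm{const}$ from Section 3 one passes to the equivalent nonnegative representation, whose spectrum has minimum $0$; then $[0,\ell_n)$ is exactly the all-zeros cylinder of rank $n$ up to an endpoint of $\nu_r$-measure zero, so $h_2(\ell_n)=2^{-n}$ exactly, and both your covering bound and your mass-distribution bound go through verbatim. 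With that substitution your proof is complete.
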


\textbf{Example.} Let $p_{0k}= \frac{1}{2}$ if $k =2^m,$ ~  and $p_{0k}= 0$ if $k \neq 2^m, ~ m \in N$.
In such a case:

$$ \dim_H({\mu, \nu^*}) = \dim_H ({S_{\mu}, \nu^*}) = 1,$$
and
$$ \dim_H({\mu, \nu_r}) = \dim_H({S_{\mu}, \nu_r}) = 0.$$


\section{Fractal dimension preservation}

In this section we develop third approach how to study level of "irregularity" of probability distributions whose spectra are of zero Hausdorff dimension.

Let $F$ be a probability distribution function and let $\gamma=\gamma_F$
be the corresponding probability measure with spectrum $S_F$. Let $\gamma^*$ be the probability measure which are
 uniformly distributed on $S_F$.

We say that a distribution function $F$ preserves the
Hausdorff--Billingsley dimension (w.r.t. $\gamma^*$) on a set $A$
if the Hausdorff--Billingsley dimension $\dim_H(E, \gamma )$ of any
subset $E \subseteq A$ is equal to the Hausdorff--Billingsley
dimension $\dim_H(E, \gamma^*)$.

If the probability distribution function $F$ strictly increases
(i.e., the spectrum $S_F$ is a closed interval), then
the above definition reduces to the usual definition of a
transformation preserving the Hausdorff--Besicovitch dimension
(see, e.g., \cite{APT04} for details).

Let $\xi$ and $\xi^*$ be random variables defined by
equalities~\eqref{eq:rand.var.def} and ~\eqref{eq:rand.var xi^*.def} respectively, and let
$\mu$ and $\nu^*$ be the corresponding probability measures.
It is easily seen that their spectra coincide. If, in
addition, $p_{ik}>0$ for any $i\in\set{0,1}$ and $k\in\N$, then
$S_{\nu^*}= S_\mu=C_r$ with $r=\sum\limits_{k=1}^\infty
\frac{(-1)^{k+1}}{q_k}$.

\textbf{Remark.} The distribution function $F_\mu$ does not preserve the
classical Hausdorff--Besicovitch dimension, because
$\dim_H(S_\mu)=0$ and $\dim_H(F_\mu (S_{\mu}))=1$. But, if the random variable $\xi$ is distributed "regularly" on its spectrum, then
$F_\mu$ can preserve the Hausdorff--Billingsley dimension on $S_{\mu}$.

\begin{theorem}
If there exists a positive constant $p_0$ such that $p_{ik}>p_0$,
then the distribution function $F_\xi$ of the random variable
$\xi$ preserves the Hausdorff--Billingsley dimension
$\dim_H(\cdot, \mu)$ on the spectrum $S_\mu$
if and only if the Hausdorff--Billingsley dimension of the measure $\mu$ with respect to the measure $\nu^*$ is equal to 1, i.e., if
\begin{equation}\label{eq:psi.DP}
\liminf_{n\to\infty} \frac{H_n}{g_n \ln2} = 1.
\end{equation}
\end{theorem}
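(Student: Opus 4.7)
The plan is to establish the two implications separately. The direction ``preservation $\Rightarrow$ (\ref{eq:psi.DP})'' is a short consequence of Theorem~\ref{thm:dimHB.psi} and in fact does not require the hypothesis $p_{ik}>p_0$; the reverse direction uses this hypothesis essentially, through a uniform local-ratio estimate based on Pinsker's inequality and a Lipschitz bound on $\ln$ near $1/2$.

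For the direction ``preservation $\Rightarrow$ (\ref{eq:psi.DP}),'' I would use the explicit set $A \subseteq S_\mu$ constructed in the proof of Theorem~\ref{thm:dimHB.psi}, which satisfies $\mu(A)=1$ and $\dim_H(A,\nu^*) = D_0 := \liminf_n H_n/(g_n \ln 2)$. Since $\mu$ is a probability measure, every $\mu$-positive set has $\dim_H(\cdot,\mu)=1$: for $\alpha<1$ and a cover $\{E_j\}$ of $A$ with $\mu(E_j)\leq\varepsilon$ one has $\sum\mu(E_j)^{\alpha}\geq \varepsilon^{\alpha-1}\mu(A)\to\infty$. Hence $\dim_H(A,\mu)=1$, and the assumed preservation $\dim_H(A,\mu)=\dim_H(A,\nu^*)$ forces $D_0=1$.

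For the converse, note first that $p_{ik}>p_0$ forces $h_j>0$ for every $j$, so $g_n=n$; since each $h_j\leq\ln 2$, the condition $\liminf H_n/(n\ln 2)=1$ upgrades to $\lim H_n/(n\ln 2)=1$. The core step is to prove \emph{uniform} convergence on $S_\mu$ of the local ratio
\[
R_n(x) := \frac{\ln\mu(\Delta_n(x))}{\ln\nu^*(\Delta_n(x))} = \frac{1}{n\ln 2}\sum_{j=1}^n\bigl(-\ln p_{c_j(x)\,j}\bigr).
\]
Pinsker's inequality $\ln 2 - h_j \geq 2(p_{0j}-1/2)^2$ gives $\tfrac{1}{n}\sum_{j=1}^{n}(p_{0j}-1/2)^2 \leq \tfrac12(\ln 2 - H_n/n)\to 0$. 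On $[p_0,1-p_0]$ the map $p\mapsto \ln(2p)$ is Lipschitz with constant $C=C(p_0)$, and the key observation $|p_{c_j j}-1/2| = |p_{0j}-1/2|$ (independent of the symbol $c_j$, since $p_{0j}+p_{1j}=1$) yields
\[
|\ln p_{c_j j}+\ln 2| \leq C\,|p_{0j}-1/2|.
\]
Averaging over $j$ and invoking Cauchy--Schwarz bounds $|R_n(x)-1|$ by a sequence $\varepsilon_n\to 0$ that \emph{does not depend on} $x \in S_\mu$.

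Granted this uniform estimate, the proof concludes by a standard covering argument. For every $\varepsilon>0$ and $n$ large, each rank-$n$ cylinder $\Delta$ satisfies $\nu^*(\Delta)^{1+\varepsilon}\leq \mu(\Delta)\leq \nu^*(\Delta)^{1-\varepsilon}$; summing the $\alpha$-th powers over a fine cylindrical cover of an arbitrary $E\subseteq S_\mu$ and using that cylindrical intervals form a fractal covering family for $S_\mu$ (cf.\ Lemma~\ref{lem:fractal.cf} and the proof of Theorem~\ref{thm:dimHB.psi}) yields $(1-\varepsilon)\dim_H(E,\mu)\leq \dim_H(E,\nu^*)\leq (1+\varepsilon)\dim_H(E,\mu)$; letting $\varepsilon\to 0$ gives the desired equality for every $E\subseteq S_\mu$. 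The main obstacle is precisely the uniformity in $x$ of $R_n(x)\to 1$: Pinsker alone controls only an averaged quantity, yielding pointwise convergence merely $\mu$-almost everywhere; what makes the uniformity possible -- and hence allows dimension equality for \emph{all} subsets of the spectrum, including those of null $\mu$-measure -- is the symmetry $|p_{c_j j}-1/2|=|p_{0j}-1/2|$ together with the Lipschitz regularity granted by the hypothesis $p_{ik}>p_0$.
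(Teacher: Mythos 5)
Your overall route coincides with the paper's: necessity via a full-$\mu$-measure set of $\nu^*$-dimension $D_0$ (the paper takes an arbitrary support with $\dim_H(\cdot,\nu^*)<1$, you take the set $A$ from Theorem~\ref{thm:dimHB.psi}; both rest on $\dim_H(E,\mu)=1$ whenever $\mu(E)=1$), and sufficiency via showing that the local ratio $\ln\mu(\Delta_n(x))/\ln\nu^*(\Delta_n(x))$ tends to $1$ uniformly on $S_\mu$ and then transferring this to dimensions through cylinder coverings. Your Pinsker-plus-Cauchy--Schwarz estimate is a clean quantitative replacement of the paper's counting of the index sets $M^{\pm}_{\varepsilon,k}$, and the observation $|p_{c_j j}-\tfrac12|=|p_{0j}-\tfrac12|$ together with the Lipschitz bound on $[p_0,1-p_0]$ is correct; this part of the argument is fine.

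There is, however, a genuine gap in your final transfer step. The inequalities $\nu^*(\Delta)^{1+\varepsilon}\le\mu(\Delta)\le\nu^*(\Delta)^{1-\varepsilon}$ hold only for cylindrical intervals, so your covering argument compares the dimensions computed with respect to the cylinder family $\Phi_M$; to conclude equality of the unrestricted Hausdorff--Billingsley dimensions $\dim_H(E,\mu)$ and $\dim_H(E,\nu^*)$ you must know that cylinder coverings are adequate for \emph{both} measures. What you cite (Lemma~\ref{lem:fractal.cf} and the proof of Theorem~\ref{thm:dimHB.psi}) yields only the $\nu^*$-side: the image $F_{\nu^*}(\Phi_M)$ is the family of binary intervals, which is fractal. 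The $\mu$-side adequacy, $\dim_H(E,\mu,\Phi_M)\le\dim_H(E,\mu)$, is exactly what you need to pass from an efficient arbitrary-interval cover for $\mu$ to an efficient cylinder cover (i.e., to get $\dim_H(E,\nu^*)\le(1+\varepsilon)\dim_H(E,\mu)$), and it is not automatic: the image $F_\mu(\Phi_M)$ is the family of $Q^*$-cylinders with matrix $q_{ik}=p_{ik}$, and Cantor-like covering families need not be faithful for dimension in general. This is precisely where the paper invokes Lemma 1 of \cite{AT2}, and it is a second, independent use of the hypothesis $p_{ik}>p_0$; your closing remark, which attributes the role of $p_0$ solely to the Lipschitz/uniformity estimate, misses this ingredient. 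Supplying that step (or reproducing the faithfulness argument for $Q^*$-cylinders with entries bounded away from zero) completes your proof.
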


\begin{proof}
\emph{Necessity.}  Let $\dim_H(\mu, \nu^*) =
\inf \{\dim_H(E, \nu^*), ~~E \in \mathcal{B}, \mu(E)=1\}$ be the
Hausdorff--Billingsley dimension of the measure $\mu$ w.r.t. the
measure $\nu^*$. From Theorem~\ref{thm:dimHB.psi} it follows
that $\dim_H(\mu, \nu^*) = \liminf\limits_{n\to\infty}
\frac{H_n}{g_n \ln2}$ with $H_n = h_1+h_2+\dots+h_n$ and $h_k =
-(p_{0k} \ln p_{0k} + p_{1k} \ln p_{1k})$. If $\dim_H(\mu, \nu^*)< 1$,
 then there exists a support $E$ of the measure $\mu$ such that
$\dim_H(\mu, \nu^*) \leq \dim_H(E, \nu^*)< 1$. Since
$\mu (E)=1$, we conclude that $\dim_H(E, \mu)=1 \not=
\dim_H(E, \nu^*)$, which contradicts the assumption of the
theorem.

\emph{Sufficiency.} It is easy to check n that $h_k \leq \ln 2$ for any
$k\in\N$ and the equality holds if and only if
$p_{0k}=p_{1k}=\frac{1}{2}$. Since $p_{ik}>0, \forall i \in \{0,1\}, \forall k \in N$, we have $g_n =n, \forall n \in N.$
Therefore,
condition~\eqref{eq:psi.DP} is equivalent to the existence of the
following limit
\begin{equation}\label{eq:HLlim}
\lim_{k \to \infty} \frac{h_1 +h_2 +\dots+h_k}{k \ln 2} = 1.
\end{equation}

For a given  $\varepsilon \in (0, \frac{1}{10}) $  let us consider the sets
\[
M_{\varepsilon,k}^+ = \left\{ j: j \in \N,\, j \leq k,\,
\abs{p_{0j}-\frac12} \leq \varepsilon \right\},
\]
and
\[
M_{\varepsilon,k}^- = \{ 1,2,\dots,k\} \setminus
M_{\varepsilon,k}^+= \left\{ j: j \in \N,\, j \leq k,\,
\abs{p_{0j}-\frac12} > \varepsilon \right\}.
\]

From $h_k\leq\ln2$ and condition~\eqref{eq:HLlim}, it follows that
\begin{equation*}
  \lim_{k \to \infty} \frac{|M_{\varepsilon,k}^+|}{k}=1, \qquad
  \lim_{k \to \infty} \frac{|M_{\varepsilon,k}^-|}{k}=0.
\end{equation*}

Let $\Delta_{\alpha_1 (x)\dots \alpha_k (x)}$ be the $k$-rank
cylindrical interval containing the point $x \in S_{\mu}$, and let us   consider the limit
\begin{equation*}
V(x) = \lim_{k \to \infty} \frac{\ln \mu (\Delta_{\alpha_1(x)
\dots \alpha_k(x) })}{\ln \nu^* (\Delta_{\alpha_1(x)\dots
\alpha_k (x)})}, ~ x \in
S_\mu
\end{equation*}
Firstly, we show that from $p_{ij} > p_0$ and from
condition~\eqref{eq:psi.DP}, it follows that the above limit
exists and $V(x)=1$ for any $x\in S_\mu$.
\begin{align*}
\frac{\ln \mu (\Delta_{\alpha_1(x) \dots \alpha_k(x) })}{\ln
\nu^* (\Delta_{\alpha_1(x)\dots \alpha_k (x)})}= \frac{ \ln
(p_{ \alpha_1(x)1}
 \cdot \dots \cdot
p_{\alpha_k(x) k }) }{\ln 2^{-k}}
= \frac{\sum\limits_{j \in M_{\varepsilon,k}^+} \ln p_{\alpha_j
(x) j} + \sum\limits_{j \in M_{\varepsilon,k}^-} \ln p_{\alpha_j
(x) j} }{-k \ln 2}.
\end{align*}

If $j \in M_{\varepsilon,k}^+$, then $\frac{1}{2}-\varepsilon \leq
p_{\alpha_j(x) j} \leq \frac{1}{2}+\varepsilon$, and,
consequently, there exists a number $C_{\varepsilon,k} \in \left[
\frac{1}{2}-\varepsilon, \frac{1}{2}+\varepsilon \right]$ such
that
\[
\sum_{j \in M_{\varepsilon,k}^+} \ln p_{\alpha_j (x) j} =
|M_{\varepsilon,k}^+| \cdot \ln C_{\varepsilon,k}.
\]

If $j \in M_{\varepsilon,k}^-$, then $p_0 \leq p_{\alpha_j(x) j}
\leq 1-p_0$. Therefore, there exists a number $d_{\varepsilon,k}
\in \left[p_0,1-p_0\right]$ such that
\[
\sum_{j \in
M_{\varepsilon,k}^-} \ln p_{\alpha_j (x) j} =
|M_{\varepsilon,k}^-| \cdot \ln d _{\varepsilon,k}.
\]

Let
\[
V^*(x):= \limsup\limits_{k \to \infty} \frac{\ln \mu
(\Delta_{\alpha_1(x)\dots \alpha_k(x)})}{\ln \nu^*
(\Delta_{\alpha_1(x)\dots\alpha_k(x)}) }= \limsup\limits_{k \to
\infty} \left( \frac{|M_{\varepsilon,k}^+|}{k} \cdot \frac{\ln
C_{\varepsilon,k}}{-\ln 2} + \frac{|M_{\varepsilon,k}^-|}{k} \cdot
\frac{\ln d_{\varepsilon,k}}{-\ln 2} \right).
\]
We have $V^*(x) \leq \frac{\ln \left( \frac{1}{2} -\varepsilon
\right)}{ - \ln 2}$, because $\frac{|M_{\varepsilon,k}^+|}{k} \to
1$, $\frac{1}{2}-\varepsilon \leq C_{\varepsilon,k} \leq
\frac{1}{2}+\varepsilon$, $\frac{|M_{\varepsilon,k}^-|}{k} \to 0$,
$p_0 \leq d_{\varepsilon,k} \leq 1-p_0$.

In the same way we can show that
\[
V_*(x) = \liminf_{k \to \infty} \frac{\ln \mu
(\Delta_{\alpha_1(x)\dots\alpha_k(x)})}{\ln \nu^*
(\Delta_{\alpha_1 (x)\dots \alpha_k(x)})} \geq \frac{\ln
\left(\frac{1}{2}+\varepsilon \right)}{-\ln2}, \quad \forall x \in
S_\nu, \quad \forall \varepsilon \in \left(0,\frac{1}{10}\right).
\]
Therefore $V(x)=1$ for any $x\in S_\mu$, and, by using
Billingsley's theorem~\cite{Bil61}, we have for all $E \subset
S_\mu$:
\[
 \dim_H (E, \nu^{*}, \Phi_M)= 1 \cdot \dim_H(E, \mu, \Phi_M),
\]
where $\Phi_M$ is a fine covering family  of cylindrical intervals $\Delta_{\alpha_1(x)\dots\alpha_k(x)}$ for the spectrum $S_{\mu}$.
The image of the family $\Phi_M$ under the distribution function $F_{\nu^{*}}$ coincides with the family of all binary subintervals of the unit interval. So, from the fractality of the family $F_{\nu^{*}}(\Phi_M)$ and from lemma \ref{lem:fractal.cf} it follows that  $ \dim_H (E, \nu^{*}, \Phi_M) = \dim_H (E, \nu^{*})$. The image of the family $\Phi_M$ under the distribution function $F_{\mu}$ coincides with the family of all $Q^{*}$-cylinders of the $Q^{*}$-expansion of real numbers with $q_{ik}= p_{ik}$ (see, e.g., \cite{AT2} for details). Since $p_{ik}>p_0>0$, from lemma 1 of the paper \cite{AT2}  and from our lemma \ref{lem:fractal.cf} it follows that $\dim_H(E, \mu, \Phi_M)=\dim_H(E, \mu)$. So, $\dim_H (E, \nu^{*}) = \dim_H(E, \mu), ~~ \forall ~E \subset
S_\mu$,  which proves the theorem.
\end{proof}


\section*{Acknowledgement}
This work was partially supported by DFG 436 UKR 113/97, EU project STREVCOMS, and by Alexander von Humboldt Foundation.

\end{document}